\newtheorem{theorem}{Theorem}
\newtheorem{prop}{Proposition}
\newtheorem{lemma}{Lemma}
\newtheorem{remark}{Remark}
\newtheorem{example}{Example}
\def\ds{\displaystyle}
\begin{document}


\title[Cubic polynomials on Lie groups: reduction of the Hamiltonian system]{Cubic polynomials on Lie groups: reduction of the Hamiltonian system}

\author{L Abrunheiro$^1$, M Camarinha$^2$ and J Clemente-Gallardo$^3$}

\address{$^1$ ISCA, Universidade de Aveiro, Rua Associa\c c\~ao
  Humanit\'aria dos Bombeiros Volunt\'arios  de Aveiro, 3810-500 Aveiro--Portugal}

\address{$^2$ Departamento de Matem\'atica, Universidade de Coimbra, 3001-501 Coimbra--Portugal}

\address{$^3$ BIFI-Departamento de F\'{\i}sica Te\'orica and Unidad
  Asociada IQFR-BIFI, Universidad de Zaragoza, Edificio I+D, Campus R\'{\i}o Ebro,  C/ Mariano Esquillor s/n,  E-50018 Zaragoza--Spain}

\eads{\mailto{abrunheiroligia@ua.pt},  \mailto{mmlsc@mat.uc.pt}, \mailto{jesus.clementegallardo@bifi.es}}

\begin{abstract}
This paper analyzes the optimal control problem of cubic polynomials on compact Lie
groups from a Hamiltonian point of view and its symmetries. The dynamics of the problem is described by a presymplectic
formalism associated with the canonical symplectic form on the
cotangent bundle of the semidirect product of the Lie group and its
Lie algebra. Using these  control geometric tools, the relation
between the Hamiltonian approach developed here and the known
variational one is analyzed. After making explicit the left
trivialized system, we use the technique of Marsden-Weinstein
reduction to remove the symmetries of the Hamiltonian system. In view
of the reduced dynamics, we are able  to guarantee, by means of the
Lie-Cartan theorem, the existence of a considerable number of
independent integrals of motion in involution.
\end{abstract}

\ams{34A26, 34H05, 49J15, 53B20, 53D20, 70H33, 70H50}
\maketitle

\section{Introduction}

Riemannian cubic polynomials (RCP), also called Riemannian cubics,  can be seen as a generalization of cubic polynomials in Euclidean spaces to Riemannian manifolds. The cubic polynomials on a Riemannian manifold are the smooth solutions of the fourth order differential equation
\begin{equation} \label{eq:cubic}
\displaystyle{\frac{\mathrm{D}^4x}{\mathrm{d}t^4}+\mathrm{R}\left(\frac{\mathrm{D}^2x}{\mathrm{d}t^2},\frac{\mathrm{d}x}{\mathrm{d}t}\right)\frac{\mathrm{d}x}{\mathrm{d}t}=0},
\end{equation}
where $\mathrm{D}/\mathrm{d}t$ denotes the covariant differentiation   and $\mathrm{R}$ the curvature tensor. The equation (\ref{eq:cubic}) is the Euler-Lagrange equation of a second order variational problem with Lagrangian given by $\frac{1}{2}\left\langle\mathrm{D}^2x/\mathrm{d}t^2,\mathrm{D}^2x/\mathrm{d}t^2\right\rangle$, where $\langle \cdot, \cdot \rangle $ denotes the Riemannian metric. This variational problem was first introduced  in 1989 (see \cite{NoaHeiPad:1989}) and explored from a dynamical interpolation perspective in 1995 (see \cite{CrSLei:1995}). Interesting points related to this subject have been developed in the last few years, namely a geometric theory surprisingly close to the Riemannian theory of geodesics (see \cite{2005AbrCamEquadiff,AbrCam:2005,AbrCamCGal:2007,2003Bloch,CamCrSLei:1995,CamCrSLei:2000,2001CamarinhaandCrouchandSilvaLeite,2002GiamboPicc,Noa:2003,Noa:2006,Pop:2007,2000SilvaLeiteCamCrouch}). We recall, in particular, a result which says that if $V$ denotes the velocity vector field of a cubic polynomial $x$, then
\begin{equation}\label{eq:I1} \displaystyle{I_1=\frac{1}{2}\;\left\langle\frac{\mathrm{D}V}{\mathrm{d}t}\:,\:\frac{\mathrm{D}V}{\mathrm{d}t}\right\rangle-
\left\langle\frac{\mathrm{D}^2V}{\mathrm{d}t^2}\:,\:V\right\rangle}
\end{equation}
is invariant along $x$. In Riemannian context,  $I_1$ plays a role similar to the one played by the length of the velocity vector field in the theory of geodesics (see, for example \cite{2001CamarinhaandCrouchandSilvaLeite}). Recently, in \cite{AbrCam:2005,Noa:2003,Noa:2006,Pop:2007}, the analysis of RCP from a variational point of view was carried out for locally symmetric manifolds and a second invariant was obtained:
\begin{equation}\label{eq:I2} \displaystyle{I_2=\left\langle\frac{\mathrm{D}^2V}{\mathrm{d}t^2},\frac{\mathrm{D}^2V}{\mathrm{d}t^2}\right\rangle-\left\langle\frac{\mathrm{D}^3V}{\mathrm{d}t^3},\frac{\mathrm{D}V}{\mathrm{d}t}\right\rangle}.
\end{equation}
The analysis of RCP given in  \cite{AbrCam:2005,Noa:2003} is
qualitative, with special attention to the case of the Lie group
$SO(3)$, where RCP correspond to Lie quadratics on the Lie
algebra. The article \cite{AbrCam:2005} introduces a reduction of the
RCP's equation for this Lie group of rotations. In \cite{Noa:2003}
some results on asymptotics and symmetries of cubics are proved for
the particular case of the so-called null cubics  on $SO(3)$. In
\cite{Noa:2006}, the author solves by quadratures the linking
equation on $SO(3)$ and $SO(1,2)$ of the Riemannian cubics. Finally,
\cite{Pop:2007} studies n-th order generalizations of RCP introduced
in \cite{CamCrSLei:1995}.

To our knowledge, the first Hamiltonian description of the RCP problem
has been considered in \cite{CamCrSLei:2000} (made in collaboration
with one of the authors). The present  paper deals, for the case of
arbitrary compact and connected Lie groups, with  a different
Hamiltonian description of the problem. Here we use a presymplectic
approach to the Pontryagin's Maximum Principle
inspired by some ideas of
\cite{BarMDieLec:2007,BenMDie:2005,DTelIb:2003,LeoCorMart:2004}.
Namely, we consider the intrinsic geometric approach  used in
\cite{DTelIb:2003,LeoCorMart:2004} for a first order general optimal
control problem, and similarly considered in \cite{BarMDieLec:2007}
for time-dependent optimal control problems by using the jet bundles
framework. In a similar way,  \cite{BenMDie:2005} gives the
geometric treatment of the Lagrangian dynamics with higher-order
constraints. The description of RCP (on an arbitrary manifold) using
these geometric ideas were first presented in \cite{AbrCamCGal:2007}
by the authors of the present paper. Recently, in
\cite{AbrCamCGal:2010,AbrCamCGal:2011},  the authors have treated the particular
situation  of the dynamic control of the spherical free rigid body, a
mechanical system with configuration manifold given by the Lie group
$SO(3)$. The new contribution of this presymplectic formalism is to
use the Lie group structure of the semidirect product of the Lie group
and its Lie algebra, $G\times\mathfrak{g}$, which  is the state space
of the optimal control problem. This allows us to use  classical
results from  \cite{1978AbrahamMarsden,BatesCushman:1997}, adapted to
this Lie group structure. Namely, we present the left trivialization
of the Hamiltonian system, a set of equations which lives in the
manifold given by the cartesian product of the semidirect product
above mentioned and the dual of its Lie algebra,
$G\times\mathfrak{g}\times\mathfrak{g}^*\times\mathfrak{g}^*$.

The main goal of this work is to reduce the degrees of freedom of the left trivialized Hamiltonian system. We first apply the symplectic point reduction theorem (\cite{MarWei:1974,2004OrtegaRatiu}) and then explore the reduced dynamics using a suitable symplectomorphism. The reduced Hamiltonian vector field lives in the manifold given by the cartesian product of a coadjoint orbit, the Lie algebra and its dual, $\mathcal{O}_{\eta}\times\mathfrak{g}\times\mathfrak{g}^*$. Furthermore, some invariants along the extremal trajectories are characterized as a crucial point to develop, in a future work, a study of the integrability of the Hamiltonian system. In fact, using the Lie-Cartan theorem (\cite{1988ArnKozNein}),  we obtain an interesting result on the number of independent integrals of motion in involution.

\nosections

The plan of the paper is as follows. Section 2 recalls some notes on compact Lie
groups and fixes the notation used in the rest of the paper. Section 3 begins with the introduction of the optimal control problem of cubic polynomials and presents the corresponding presymplectic approach. After that, we provide the left trivialized description equivalent to the variational one (\cite{CrSLei:1995}) in a similar way to what happens in \cite{CamCrSLei:2000}. However, it is important to remark that our Hamiltonian system and the one in \cite{CamCrSLei:2000} are different. The last section contains the reduction of the Hamiltonian problem by means of the Marsden-Weinstein technique. As we have mentioned above, the system will be reduced to a system on the cartesian product of a coadjoint orbit, the algebra of the Lie group and the dual of this algebra. In this context, besides the geometric deduction of the two known invariants, we find some more invariants along the Riemannian cubics and find a relevant result on its independence and involution.

\section{Preliminary notes}

The present section gives some notations, definitions and results from the Lie groups theory,  which we will be using throughout the paper.

\subsection{The Lie group $G$} \label{subsection:Liegroup}

Let $G$  be a connected and compact Lie group with identity $e$. The corresponding Lie algebra is $(\mathfrak{g},[.,.])$, where $[.,.]$ is the bracket operation and $\mathfrak{g}^*$ denotes the dual space of this algebra. Furthermore, the elements of $G$ are denoted by $x$ or $g$ and the maps $G\times G\to G, (x,g)\mapsto  xg$ and $G\to G, x\mapsto x^{-1}$ are the multiplication and inversion operations for the Lie group $G$, respectively. Given $x,g\in G$, let  $L_x:G\to G$ and $R_x:G\to G$ be, respectively, the left and right translations by $x$. The tangent of $L_x$ at $g$ is denoted by  $T_gL_x$ and $T^*_gL_x$ represents its transpose. Recall the following definitions:
\begin{itemize}
\item[--] The adjoint representation of the Lie group $G$ is denoted by $\mathrm{Ad}$. It gives for each $x\in G$ an algebra automorphism defined by $\mathrm{Ad}_x=T_e(R_{x^{-1}}\circ L_x).$

\item[--] The adjoint representation of the Lie algebra $\mathfrak{g}$ is the tangent of $\mathrm{Ad}$ at the identity $e$ and it is denoted by $\mathrm{ad}$. For each $Y,Z\in \mathfrak{g}$, we have $\mathrm{ad}_YZ=[Y,Z].$

\item[--] The map $\mathrm{Ad}^*: G\rightarrow Aut(\mathfrak{g}^*)$ defined, by $[\mathrm{Ad}^*(x)](\xi):=\mathrm{Ad}^*_{x^{-1}}\xi=\xi\circ\mathrm{Ad}_{x^{-1}}$, for each $x\in G$ and $\xi\in\mathfrak{g}^*$,  is called the  co-adjoint representation of $G$.

\item[--] The co-adjoint representation of $\mathfrak{g}$ is the map $ \mathrm{ad}^*:\mathfrak{g}\rightarrow Aut(\mathfrak{g}^*)$ defined, for each $Y\in\mathfrak{g}$ and $\xi\in\mathfrak{g}^*$, by $[\mathrm{ad}^*(Y)](\xi):=-\mathrm{ad}^*_Y\xi=-\xi\circ \mathrm{ad}_Y.$
\end{itemize}

\nosections

Since the Lie group is assumed to be connected and compact, we can guarantee the existence of a bi-invariant metric on $G$, which we shall denote by $\langle .,. \rangle$. This statement and the following  result can be found for instance in \cite{KobaNomi:196369}.
\begin{theorem} \cite{KobaNomi:196369} \label{theorem:basicpropertiesLieGroup}
If $G$ is a Lie group equipped with a bi-invariant metric, the metric connection $\nabla$ and the curvature tensor $\mathrm{R}$ associated with that metric are given, respectively, by  $\nabla_YZ=\frac{1}{2}[Y,Z]$ and $\mathrm{R}(Y,Z)W=-\frac{1}{4}[[Y,Z],W]$, where $Y, Z$ and $W$ are left  invariant vector fields. Furthermore, the first above equality  implies that $\left\langle [Y,Z],W \right\rangle =\left\langle Y,[Z,W]\right\rangle$.
\end{theorem}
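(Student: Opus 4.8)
The plan is to extract everything from the single structural consequence of bi-invariance, namely that the induced inner product on $\mathfrak{g}=T_eG$ is invariant under the adjoint action. First I would observe that a bi-invariant (i.e.\ both left- and right-invariant) metric satisfies $\langle\mathrm{Ad}_x Y,\mathrm{Ad}_x Z\rangle=\langle Y,Z\rangle$ for all $x\in G$ and $Y,Z\in\mathfrak{g}$: writing $\mathrm{Ad}_x=T_e(R_{x^{-1}}\circ L_x)$ as in the excerpt, right-invariance converts the factor $T_x R_{x^{-1}}$ into an isometry $T_xG\to T_eG$ and left-invariance handles $T_eL_x$, so the two cancel. Differentiating this identity along $x=\exp(tX)$ at $t=0$, using $\frac{\mathrm{d}}{\mathrm{d}t}\big|_{0}\mathrm{Ad}_{\exp(tX)}Y=\mathrm{ad}_XY=[X,Y]$, yields the infinitesimal antisymmetry $\langle[X,Y],Z\rangle+\langle Y,[X,Z]\rangle=0$, and a short cyclic rearrangement of this relation gives precisely the stated identity $\langle[Y,Z],W\rangle=\langle Y,[Z,W]\rangle$.

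Next I would compute the Levi-Civita connection on left-invariant vector fields from the Koszul formula. The decisive simplification is that for left-invariant $Y,Z,W$ the functions $\langle Z,W\rangle$, $\langle Y,W\rangle$, $\langle Y,Z\rangle$ are constant on $G$, so the three derivative terms of Koszul drop out, leaving
\[ 2\langle\nabla_YZ,W\rangle=\langle[Y,Z],W\rangle-\langle[Z,W],Y\rangle+\langle[W,Y],Z\rangle. \]
Feeding the ad-invariance identity into the last two brackets collapses all three terms into a single copy of $\langle[Y,Z],W\rangle$, so $2\langle\nabla_YZ,W\rangle=\langle[Y,Z],W\rangle$; since $W$ ranges over a basis of $\mathfrak{g}$ and the metric is nondegenerate, this forces $\nabla_YZ=\frac{1}{2}[Y,Z]$.

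With the connection in hand the curvature is a direct substitution. Writing $\mathrm{R}(Y,Z)W=\nabla_Y\nabla_ZW-\nabla_Z\nabla_YW-\nabla_{[Y,Z]}W$ and inserting $\nabla_ZW=\frac{1}{2}[Z,W]$ etc.\ produces $\frac{1}{4}[Y,[Z,W]]-\frac{1}{4}[Z,[Y,W]]-\frac{1}{2}[[Y,Z],W]$; the Jacobi identity rewrites $[Y,[Z,W]]-[Z,[Y,W]]=[[Y,Z],W]$, and the two remaining terms combine to $-\frac{1}{4}[[Y,Z],W]$, as claimed. For the concluding identity I would also note the alternative route suggested by the statement: metric compatibility of $\nabla$ together with constancy of $\langle Z,W\rangle$ gives $0=Y\langle Z,W\rangle=\frac{1}{2}\langle[Y,Z],W\rangle+\frac{1}{2}\langle Z,[Y,W]\rangle$, which is exactly the antisymmetry above and hence, after the same cyclic manipulation, the desired equality. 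I expect the only delicate point to be the bookkeeping: keeping the Koszul signs correct and fixing the adjoint conventions so that the cyclic cancellation yields the clean factor $\frac{1}{2}$; everything else is routine once ad-invariance has been established.
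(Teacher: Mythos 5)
Your proof is correct. Note, however, that the paper offers no proof of this statement at all: it is quoted as a known result, with a citation to Milnor's \emph{Morse Theory}, so there is no internal argument to compare against. Your route --- bi-invariance $\Rightarrow$ $\mathrm{Ad}$-invariance of the inner product $\Rightarrow$ (by differentiation along $\exp(tX)$) the skew-symmetry $\left\langle [X,Y],Z\right\rangle=-\left\langle Y,[X,Z]\right\rangle$, then the Koszul formula with the derivative terms killed by left-invariance to get $\nabla_YZ=\frac{1}{2}[Y,Z]$, and finally the Jacobi identity for the curvature --- is precisely the standard argument of the cited source, and your closing observation correctly recovers the paper's stated logical order (connection formula plus metric compatibility $\Rightarrow$ the bracket identity), so the reversal of order in your main line of reasoning is harmless.
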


\nosections

In the course of this paper, we shall fix an orthonormal  basis in the Lie algebra $\mathfrak{g}$. The corresponding dual basis is a basis of the dual space $\mathfrak{g}^*$. These two bases generate left invariant frame and co-frame fields on $G$, respectively. We assume the following notations:
\begin{itemize}
\item[--] Let $Y$ be  a curve in $\mathfrak{g}$ and $\xi$ a curve in $\mathfrak{g}^*$. We represent by $\dot{Y}$ (respectively, $\dot{\xi}$) the element of $\mathfrak{g}$ (respectively, $\mathfrak{g}^*$)  which has components with respect to the basis of $\mathfrak{g}$ (respectively, $\mathfrak{g}^*$) above mentioned, given by the derivative of the components of $Y$  (respectively, $\xi$).
\item[--] Given $\xi\in\mathfrak{g}^*$, the tangent vector identified with this co-vector  by the Riemannian metric will be denoted by $X_{\xi}\in\mathfrak{g}$. That is, $\xi(Y)=\langle X_{\xi},Y\rangle$, $\forall Y\in\mathfrak{g}$.
\item[--] With the above notation, it is simple to verify that $\dot{X}_\xi=X_{\dot{\xi}}$ and $X_{\mathrm{ad}^*_Y\xi}=-\mathrm{ad}_YX_\xi$.
\end{itemize}

\subsection{The tangent bundle Lie group and its left trivialization} \label{subsection:TangentBundleTG}

\begin{lemma} \cite{1993KolarMichorSlovak}
The tangent bundle $TG$ is a Lie group with a group operation defined as the tangent prolongation of the original one on $G$. That is, the multiplication operation for $TG$ is defined by
\[
(v_x,v_g)\in T_xG\times T_gG\longmapsto v_xv_g=T_xR_gv_x+T_gL_xv_g\;\in T_{xg}G
\]
and the inversion is defined by
\[
v_x\in T_xG\longmapsto v_x^{-1}=-\left(T_eL_{x^{-1}}\circ T_xR_{x^{-1}}\right)(v_x) \,\in T_{x^{-1}}G.
\]
\end{lemma}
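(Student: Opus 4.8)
The plan is to obtain both the group structure and the explicit formulas from a single source: the tangent functor $T$ applied to the group operations on $G$. Writing $m : G\times G\to G$ for the multiplication and $\iota : G\to G$ for the inversion, I would first recall that $T$ preserves finite products, so that the canonical identification $T(G\times G)\cong TG\times TG$ holds. Under this identification, $Tm : TG\times TG\to TG$ and $T\iota : TG\to TG$ become candidate multiplication and inversion maps on $TG$, while the zero vector $0_e\in T_eG$ is the candidate identity.

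Next I would argue that the group axioms hold automatically by functoriality. The associativity, unit and inverse laws for $G$ are encoded in commutative diagrams built from $m$, $\iota$ and the constant map to $e$; applying the product-preserving functor $T$ sends each such diagram to the corresponding diagram for $Tm$, $T\iota$, $0_e$, which therefore commutes as well. Smoothness of the operations is immediate, since $Tm$ and $T\iota$ are smooth whenever $m$ and $\iota$ are. This establishes that $(TG, Tm, T\iota)$ is a Lie group without any computation.

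It then remains to identify $Tm$ and $T\iota$ with the stated formulas. For the product, I would split a tangent vector $(v_x,v_g)\in T_xG\times T_gG$ as $(v_x,0)+(0,v_g)$ and use linearity of $T_{(x,g)}m$. Freezing the second argument reduces $m$ to the right translation $R_g$, giving $T_{(x,g)}m(v_x,0)=T_xR_g\,v_x$; freezing the first reduces $m$ to $L_x$, giving $T_{(x,g)}m(0,v_g)=T_gL_x\,v_g$. Adding these yields $v_xv_g=T_xR_gv_x+T_gL_xv_g\in T_{xg}G$, as claimed.

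For the inversion, I would differentiate the identity $m(x,x^{-1})=e$. Applying the product formula just obtained to the curve $x\mapsto(x,\iota(x))$ gives $T_xR_{x^{-1}}v_x+T_{x^{-1}}L_x\,(T_x\iota\,v_x)=0$ for every $v_x\in T_xG$. Since $L_{x^{-1}}\circ L_x=\mathrm{id}_G$ forces $(T_{x^{-1}}L_x)^{-1}=T_eL_{x^{-1}}$, solving for $T_x\iota\,v_x$ produces $v_x^{-1}=-\left(T_eL_{x^{-1}}\circ T_xR_{x^{-1}}\right)(v_x)\in T_{x^{-1}}G$, matching the statement. The only mildly delicate point is the bookkeeping of base points across these tangent maps; the genuine group-theoretic content is carried entirely by the functoriality step, which I expect to be routine rather than an obstacle.
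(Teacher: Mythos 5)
Your proof is correct and complete. The paper itself gives no proof of this lemma---it is quoted directly from \cite{1993KolarMichorSlovak}---so there is no internal argument to compare against; your route (the tangent functor preserves finite products, hence carries group objects to group objects, with the explicit formulas recovered by linearity of $T_{(x,g)}m$ applied to the splitting $(v_x,v_g)=(v_x,0_g)+(0_x,v_g)$, and by differentiating $m(x,\iota(x))=e$ and inverting $T_{x^{-1}}L_x$ via $L_{x^{-1}}\circ L_x=\mathrm{id}_G$) is precisely the standard proof found in that reference. The base-point bookkeeping you flag as delicate does check out: $T_xR_{x^{-1}}$ maps $T_xG\to T_eG$ and $T_eL_{x^{-1}}$ maps $T_eG\to T_{x^{-1}}G$, so the composite lands in $T_{x^{-1}}G$ as the statement requires.
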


Consider the semidirect product $G\times\mathfrak{g}$ of the Lie group $G$ and the Lie algebra $\mathfrak{g}$ regarded as abelian group, under  the right action of  $G$ on $\mathfrak{g}$, $(x,Y)\in G\times\mathfrak{g} \longmapsto Ad_{x^{-1}}Y$.
\begin{lemma} \label{lemma:SemidirectProduct}
The semidirect product $G\times\mathfrak{g}$ is a Lie group whose underlying manifold is the cartesian product $G\times\mathfrak{g}$  and group multiplication law
\[
(x,Y)(g,Z)=(xg,\mathrm{Ad}_{g^{-1}}Y+Z),
\]
for $(x,Y),(g,Z)\in G\times\mathfrak{g}$. The inversion is defined as $(x,Y)^{-1}=(x^{-1},-\mathrm{Ad}_xY)$.
\end{lemma}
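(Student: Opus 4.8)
The plan is to verify directly that the stated operations satisfy the group axioms and are smooth, so that $G\circledS\mathfrak{g}$ is a Lie group. The one structural ingredient I would isolate first is the homomorphism property of the adjoint representation, $\mathrm{Ad}_x\mathrm{Ad}_g=\mathrm{Ad}_{xg}$ together with $\mathrm{Ad}_{x^{-1}}=(\mathrm{Ad}_x)^{-1}$, which is what makes the second component transform consistently. From it one reads off that the assignment $Y\mapsto\mathrm{Ad}_{x^{-1}}Y$ defines a \emph{right} action of $G$ on $\mathfrak{g}$ by additive automorphisms: for $x,g\in G$ one has $\mathrm{Ad}_{g^{-1}}(\mathrm{Ad}_{x^{-1}}Y)=\mathrm{Ad}_{(xg)^{-1}}Y$, and each $\mathrm{Ad}_{x^{-1}}$ is linear, hence a group automorphism of $(\mathfrak{g},+)$. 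This is exactly the data needed for the semidirect product construction, and the proposed multiplication is its standard realization.

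Next I would check associativity by expanding both bracketings of $(x,Y)(g,Z)(h,W)$. Computing $((x,Y)(g,Z))(h,W)$ gives the second component $\mathrm{Ad}_{h^{-1}}(\mathrm{Ad}_{g^{-1}}Y+Z)+W$, and the homomorphism identity collapses $\mathrm{Ad}_{h^{-1}}\mathrm{Ad}_{g^{-1}}$ to $\mathrm{Ad}_{(gh)^{-1}}$; computing $(x,Y)((g,Z)(h,W))$ yields the same expression, so the two agree. The element $(e,0)$ is then seen to be a two-sided identity since $\mathrm{Ad}_{e^{-1}}=\mathrm{id}$, and a direct substitution verifies that $(x^{-1},-\mathrm{Ad}_xY)$ is a two-sided inverse of $(x,Y)$: both products reduce to $(e,0)$ after cancelling $\mathrm{Ad}_{x^{-1}}\mathrm{Ad}_x=\mathrm{id}$. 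Because the action is a right action, care must be taken that the inverse carries $\mathrm{Ad}_x$ rather than $\mathrm{Ad}_{x^{-1}}$; this is the only place where the convention genuinely matters.

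Finally I would establish smoothness of the structure maps. The multiplication $(x,Y,g,Z)\mapsto(xg,\mathrm{Ad}_{g^{-1}}Y+Z)$ is smooth because multiplication on $G$ is smooth, the adjoint representation $\mathrm{Ad}:G\to\mathrm{Aut}(\mathfrak{g})$ and inversion on $G$ are smooth, the evaluation $(g,Y)\mapsto\mathrm{Ad}_{g^{-1}}Y$ is therefore smooth, and addition on $\mathfrak{g}$ is smooth; the same reasoning applies to inversion $(x,Y)\mapsto(x^{-1},-\mathrm{Ad}_xY)$. Together with the manifold structure of the cartesian product $G\times\mathfrak{g}$ this completes the claim. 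Strictly speaking, no step is a genuine obstacle: the proof is a routine verification, and the only thing to watch is the bookkeeping of the $\mathrm{Ad}_{g^{-1}}$ factors so that associativity and the inversion formula come out with the orientation dictated by the right action.
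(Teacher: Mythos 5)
Your verification is correct: associativity via $\mathrm{Ad}_{h^{-1}}\mathrm{Ad}_{g^{-1}}=\mathrm{Ad}_{(gh)^{-1}}$, the identity $(e,0)$, the two-sided inverse check using $\mathrm{Ad}_{x^{-1}}\mathrm{Ad}_x=\mathrm{id}$, and the smoothness of multiplication and inversion are exactly what the statement requires, and your computations are accurate. Note, however, that the paper does not prove this lemma at all: it states it and immediately remarks that the structure is a special case of the general semidirect product defined by a right representation of a Lie group on a vector space, pointing to the literature (e.g.\ Holm, Marsden and Ratiu) where such constructions with right rather than left representations are standard. So the paper's route is an appeal to the general theory, whereas yours is a self-contained direct verification. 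What your route buys is explicitness: it isolates the homomorphism property of $\mathrm{Ad}$ as the single structural ingredient, and it shows concretely why the inversion formula must carry $\mathrm{Ad}_x$ rather than $\mathrm{Ad}_{x^{-1}}$ --- the one place, as you rightly flag, where the right-action convention genuinely matters. What the paper's route buys is brevity and context: once the multiplication is recognized as the standard realization of $G\circledS V$ for the right $G$-action $(x,Y)\mapsto\mathrm{Ad}_{x^{-1}}Y$ on the abelian group $V=\mathfrak{g}$, the group axioms and the Lie group structure follow at once from the general theorem, and the construction is placed in the framework the paper reuses later (e.g.\ for the left-translation and adjoint formulas on $G\times\mathfrak{g}$). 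Either way the result stands; your write-up fills in the routine details the paper leaves to the reader.
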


The semidirect product structure considered here is a special case of the general one defined by a right representation of a Lie group on a vector space  that may be found in works on semidirect products, particulary the ones on models of continuum mechanics and plasmas where it is convenient to work with right instead of left representations (see, for example \cite{HolmMarsRatiu:1986}).

\begin{prop} \cite{1993KolarMichorSlovak}
The left trivialization of  $TG$  determined by the map
\begin{equation} \label{eq:LeftTrivTG}
\begin{array}{llll}\lambda : & TG & \longrightarrow & G\times\mathfrak{g} \\[5pt] & v_x\in T_xG & \longmapsto & \left(x,T_xL_{x^{-1}}v_x\right),\end{array}
\end{equation}
allows us to write   the Lie group diffeomorphism $TG\simeq G\times\mathfrak{g}$.
\end{prop}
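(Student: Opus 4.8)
The plan is to show that the left trivialization map $\lambda$ is a Lie group isomorphism, i.e. a diffeomorphism that is also a group homomorphism, where the domain $TG$ carries the tangent-prolonged group structure from the first lemma and the codomain $G\times\mathfrak{g}$ carries the semidirect product structure from Lemma~\ref{lemma:SemidirectProduct}. First I would verify that $\lambda$ is a diffeomorphism. This is essentially the standard left trivialization of the tangent bundle of a Lie group: its smooth inverse is
\[
\lambda^{-1}(x,Y)=T_eL_x\,Y\;\in T_xG,
\]
and smoothness of both $\lambda$ and $\lambda^{-1}$ follows from smoothness of the multiplication and of the assignment $x\mapsto T_xL_{x^{-1}}$, so this part is routine.

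The substantive step is to check that $\lambda$ intertwines the two group operations, that is
\[
\lambda(v_x\,v_g)=\lambda(v_x)\,\lambda(v_g),
\]
where the product on the left is the tangent prolongation and the product on the right is the semidirect product law. Writing $Y=T_xL_{x^{-1}}v_x$ and $Z=T_gL_{g^{-1}}v_g$ so that $\lambda(v_x)=(x,Y)$ and $\lambda(v_g)=(g,Z)$, I would compute the first component of $\lambda(v_xv_g)$, which is simply the base point $xg$, matching the first component $xg$ of the semidirect product. For the second (fiber) component I would apply $T_{xg}L_{(xg)^{-1}}$ to the tangent product $v_xv_g=T_xR_gv_x+T_gL_xv_g$ and expand using the chain rule and the cocycle-type identities for translations, in particular $L_{(xg)^{-1}}=L_{g^{-1}}\circ L_{x^{-1}}$ and the relation $T(L_{x^{-1}}\circ R_x)=\mathrm{Ad}_{x^{-1}}$ at the appropriate point. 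The expectation is that the two terms reorganize into $\mathrm{Ad}_{g^{-1}}Y+Z$, precisely the fiber component prescribed by Lemma~\ref{lemma:SemidirectProduct}.

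The main obstacle, and the only place where care is genuinely required, is bookkeeping the base points in this second-component computation: each tangent map $T_pL_x$ or $T_pR_g$ is attached to a specific point, and the curvature-free manipulations must track how $T_xR_g v_x$ and $T_gL_x v_g$ transform under $T_{xg}L_{(xg)^{-1}}$. The cleanest way to carry this out, which I would adopt, is to represent $v_x=T_eL_x\,Y$ and $v_g=T_eL_g\,Z$ from the outset and reduce everything to identities among the maps $L$, $R$ at $e$; the term coming from $v_x$ then produces a conjugation $T_e(L_{g^{-1}}\circ R_g)\,Y=\mathrm{Ad}_{g^{-1}}Y$ by the very definition of $\mathrm{Ad}$ recalled in Section~\ref{subsection:Liegroup}, while the term coming from $v_g$ collapses to $Z$. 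Once the homomorphism property is established alongside the diffeomorphism property, $\lambda$ is a Lie group isomorphism and the claimed identification $TG\simeq G\times\mathfrak{g}$ follows.
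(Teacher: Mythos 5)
Your proposal is correct, but note that the paper itself offers no proof of this proposition: it is imported verbatim from Kol\'a\v{r}--Michor--Slov\'ak \cite{1993KolarMichorSlovak}, so there is no internal argument to compare against. Your verification supplies exactly the details the citation hides, and it is sound: writing $v_x=T_eL_xY$, $v_g=T_eL_gZ$, the fiber component of $\lambda(v_xv_g)$ is $T_{xg}L_{(xg)^{-1}}\bigl(T_xR_gv_x+T_gL_xv_g\bigr)$; the first summand becomes $T_e\bigl(L_{(xg)^{-1}}\circ R_g\circ L_x\bigr)Y=T_e\bigl(L_{g^{-1}}\circ R_g\bigr)Y=\mathrm{Ad}_{g^{-1}}Y$ (using that left and right translations commute, together with the paper's convention $\mathrm{Ad}_x=T_e(R_{x^{-1}}\circ L_x)$), and the second collapses to $T_e\bigl(L_{(xg)^{-1}}\circ L_x\circ L_g\bigr)Z=T_eL_eZ=Z$, which is precisely the semidirect product law of Lemma~\ref{lemma:SemidirectProduct}. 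Combined with the routine observation that $\lambda$ is a diffeomorphism with inverse $(x,Y)\mapsto T_eL_xY$, this makes $\lambda$ a Lie group isomorphism. What your argument buys, beyond the citation, is an explicit check that the specific semidirect product structure chosen in the paper (right action $(x,Y)\mapsto\mathrm{Ad}_{x^{-1}}Y$) is the one forced by the tangent-prolonged multiplication on $TG$; this is worth having on record, since the paper's later formulas, in particular equation~(\ref{eq:LeftTranslation}), depend on exactly this compatibility.
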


We introduce now some important  notations used in the rest of the paper.
\begin{itemize}
\item[--] The elements of the tangent bundle $T(G\times\mathfrak{g})$ are denoted by
   \[
   (v_x,Y,U)\in T_{(x,Y)}(G\times\mathfrak{g})=T_xG\times\{Y\}\times\mathfrak{g}.
   \]

\item[--] The second tangent bundle  of $G$, $T^2G$, can also  be trivialized by using the map $\lambda$ and then realized as a  bundle over $G\times\mathfrak{g}$, which is a  subbundle of $T(G\times\mathfrak{g})$. We represent this bundle by $\overline{T^2}G$ and denote its elements as
    \[
    (v_x,U)\in T_{(x,T_x{L}_{x^{-1}}v_x)}(G\times\mathfrak{g})\simeq T_xG\times\mathfrak{g}.
    \]

\item[--] The elements of  cotangent bundle $T^*(G\times\mathfrak{g})$ are represented by
    \[
    (\alpha_x,Y,\xi)\in T_{(x,Y)}^*(G\times\mathfrak{g})=T^*_xG\times\{Y\}\times\mathfrak{g}^*.
    \]
\end{itemize}
In the previous statements we are considering  $(x,Y)\in G\times\mathfrak{g}$. Throughout this paper we will, for the sake of simplicity, occasionally assume the identification between elements of $T_{(x,Y)}(G\times\mathfrak{g})$ (respectively, $T^*_{(x,Y)}(G\times\mathfrak{g})$) and elements of $T_xG\times\mathfrak{g}$ (respectively, $T^*_xG\times\mathfrak{g}^*$).

According  the  Lie group structure chosen in lemma \ref{lemma:SemidirectProduct}, we easily compute:
\begin{equation} \label{eq:LeftTranslation}
T_{(e,0)}L_{(x,Y)}(Z,U)=(T_eL_xZ,U+\mathrm{ad}_YZ)
\end{equation}
and
\begin{equation}
 \label{eq:adjunt}
\mathrm{ad}_{(Y,Z)}(Y',Z')=\left(\mathrm{ad}_YY',\mathrm{ad}_YZ'+\mathrm{ad}_ZY'\right),
\end{equation} 
where $x\in G$ and $Y, Z, U, Y', Z'\in \mathfrak{g}$. Obviously, these
formulas can be  derived from the general known ones from the theory
of semidirect products.

\section{Hamiltonian system} \label{section:HamilSystem}

The aim of this section  is to give a Hamiltonian description of the optimal control problem of cubic polynomials on $G$ based on some material published in \cite{AbrCamCGal:2007}, where we used a geometric formulation similar to the one developed in \cite{BenMDie:2005} for higher order constrained variational problems. The section begins with the introduction of the optimal control problem, where by means of the left translation on $G$,  the state space has been left trivialized to be $G\times\mathfrak{g}$ instead of $TG$.  After that, we apply a presymplectic constraint algorithm and the result is a Hamiltonian system on a space symplectomorphic to $T^*(G\times\mathfrak{g})$.  Using again  a left trivialization, but now determined by left translation on the group $G\times\mathfrak{g}$, we pass to a Hamiltonian description on $G\times\mathfrak{g}\times\mathfrak{g}^*\times\mathfrak{g}^*$.

\subsection{Optimal control problem}

Considering  the left trivialization (\ref{eq:LeftTrivTG}) of  $TG$, the state space for our problem  may be taken to be the  semidirect product $G\times\mathfrak{g}$  and the  bundle of controls as the second tangent bundle $\overline{T^2}G$. The optimal control problem of cubic polynomials on $G$ consists in finding the $C^2$ piecewise smooth curve $\gamma:[0,T]\to\overline{T^2}G$ with fixed endpoints in state space, minimizing the functional $\int_0^T L(\gamma(t))\mathrm{d}t$, with $T\in\mathbb{R}^+$ fixed, for $L:\overline{T^2}G\to \mathbb{R}$ the cost functional defined by
\begin{equation}\label{eq:Lagrangian}
L(v_x,U)=\frac{1}{2} \langle U,U \rangle
\end{equation}
and satisfying the control system
\begin{equation} \label{eq:controlsystem}
\frac{\mathrm{d}}{\mathrm{d}t}\left(\tau_2^1(\gamma(t)\right)=F(\gamma(t)),
\end{equation}
where $\tau_2^1:\overline{T^2}G\to G\times\mathfrak{g}$ is the natural
projection and   $F:\overline{T^2}G\to T(G\times\mathfrak{g})$ is the
vector field  along this projection  defined by 
\begin{equation}\label{eq:VFControl}
F(v_x,U)=(v_x,T_xL_{x^{-1}}v_x,U).
\end{equation}

\[
\xymatrix{  & & \overline{T^2}G  \ar[lldd]_{\tau_2^1} \ar[rr]^{F} & & T(G\times\mathfrak{g})
 \\
\\
 G\times\mathfrak{g}  & & [0,T] \ar[ll]^{\tau_2^1\circ\gamma} \ar[uu]^{\gamma} \ar[rruu]_{\frac{\mathrm{d}}{\mathrm{d}t}\left(\tau_2^1\circ\gamma\right)} & &  \\}
\]

Notice that, according the notation set in the subsection
\ref{subsection:TangentBundleTG}, a curve $\gamma$ in
$\overline{T^2}G$ is defined by means of three elements: a curve $x$
in $G$; a vector field $Y_x$ along $x$ (which can be seen as a curve
in $TG$ satisfying $\pi_G\circ Y_x=x$, where $\pi_G:TG\to G$ is the
canonical projection); and a curve $U$ in $\mathfrak{g}$. So,
$\gamma(t)=(Y_x(t),U(t))\in T_{x(t)}G\times\mathfrak{g}$ and we have
$\tau_2^1(\gamma(t))=(x(t),T_{x(t)}L_{x(t)}Y_x(t))$. Consequently,
using the appropriated basis of left invariant vector fields on $G$ to
develop the calculus, it is simply to prove that the control system
(\ref{eq:controlsystem}) can be written as 
\begin{equation}
\dot{x}(t)=Y_x(t)\qquad\frac{\mathrm{D}Y_x}{\mathrm{d}t}(t)=T_eL_{x(t)}U(t),
\end{equation}
which is a  version of the control system presented in papers \cite{BlCr:1996,CamCrSLei:2000}.

\subsection{Dynamics of the optimal control problem}

The co-state space of our system is the cotangent bundle $T^*(G\times\mathfrak{g})$. The dynamics of the control problem is described by a presymplectic system $\left(\mathcal{T},\overline{\Omega},\overline{H}\right)$ whose total space is the bundle over $G\times\mathfrak{g}$ given by
\begin{equation}
\mathcal{T}=T^*(G\times\mathfrak{g})\times_{_{G\times\mathfrak{g}}}\overline{T^2}G.
\end{equation}
The elements of this space are  points in $T^*_xG\times\{Y\}\times\mathfrak{g}^*\times\mathfrak{g}$ denoted by $(\alpha_x,Y,\xi,U)$, where $(x,Y)\in G\times\mathfrak{g}$. Consider the  canonical projections  $pr_1:\mathcal{T}\to T^*(G\times\mathfrak{g})$, $(\alpha_x,Y,\xi,U)\mapsto (\alpha_x,Y,\xi)$ and $pr_2:\mathcal{T}\to\overline{T^2}G$, $(\alpha_x,Y,\xi,U)\mapsto (T_eL_xY,U)$. The closed two form is defined by the  pull-back
\begin{equation} \label{eq:twoform}
\overline{\Omega}=(pr_1)^*\Omega_1,
\end{equation}
with $\Omega_1$ denoting  the canonical symplectic two form on the space $T^*(G\times\mathfrak{g})$. The Hamiltonian is defined by $\overline{H}=\ll pr_1,F\circ pr_2\gg -L\circ pr_2$, where $F$ and $L$ are defined by (\ref{eq:Lagrangian}) and (\ref{eq:VFControl})  and $\ll .,. \gg$ stands for the canonical duality product of vectors and covectors on $G\times\mathfrak{g}$. Then,
\begin{equation} \label{eq:HamiltonianPresympl}
\overline{H}(\alpha_x,Y,\xi,U)=\left(T^*_eL_x\alpha_x\right)(Y)+\xi\left(U\right)-\frac{1}{2}\langle U,U\rangle.
\end{equation}
The dynamical vector field of the system is  the vector field $X_{\overline{H}}:\mathcal{T}\to T\mathcal{T}$   solution of the dynamical system $i_{X_{\overline{H}}}\overline{\Omega}=d\overline{H}$.

Notice that the optimal control problem is obviously regular and thus applying the geometric algorithm of presymplectic systems (see \cite{GotNes:1979,GotNes:1980}) to  $\left(\mathcal{T},\overline{\Omega},\overline{H}\right)$, we obtain a symplectic system on the manifold $W_1=\{(\alpha_x,Y,\xi,U)\in\mathcal{T}:U=X_{\xi}\}$, where $X_{\xi}\in\mathfrak{g}$ is the tangent vector identified with the covector $\xi\in\mathfrak{g}^*$ by the Riemannian metric of $G$. Hence, $(W_1,\overline{H}_{W_1},{\overline{\Omega}}_{W_1})$ is  a symplectic system, with $\overline{\Omega}_{W_1}$  and $\overline{H}_{W_1}$ being the restrictions to $W_1$ of (\ref{eq:twoform}) and (\ref{eq:HamiltonianPresympl}), respectively.
The  map $f$ defined below,  gives us  a diffeomorphism between the symplectic manifolds $\left(T^*(G\times\mathfrak{g}),\Omega_1\right)$ and $\left(W_1,\overline{\Omega}_{W_1}\right)$
\begin{equation} \label{eq:symplectomorphism}
\begin{array}{llll} f: & \left(T^*(G\times\mathfrak{g}),\Omega_1\right) & \longrightarrow & \left(W_1,\overline{\Omega}_{W_1}\right) \\[5pt]
&  (\alpha_x,Y,\xi)  & \longmapsto & (\alpha_x,Y,\xi,X_{\xi}) .\end{array}
\end{equation}
So, we have a symplectomorphism between the two manifolds (see \cite{1978AbrahamMarsden}, p 177). In this sense, we construct the Hamiltonian $H_1:=\overline{H}_{W_1}\circ f:T^*(G\times\mathfrak{g})\to\mathbb{R}$. We get
\begin{equation} \label{eq:HamiltonianH1}
H_1(\alpha_x,Y,\xi)=\left(T^*_eL_x\alpha_x\right)(Y)+\frac{1}{2}\xi\left(X_{\xi}\right),
\end{equation}
for each $(\alpha_x,Y,\xi)\in T^*_{(x,Y)}(G\times\mathfrak{g})$, where $(x,Y)\in G\times\mathfrak{g}$. Furthermore, the existence of the symplectomorphism  (\ref{eq:symplectomorphism}) allows us to conclude that (see \cite{1978AbrahamMarsden}, p 194) the study of the dynamical system defining the Hamiltonian vector field $X_{\overline{H}_{W_1}}$ associated to $\overline{H}_{W_1}$  is reduced to the study of the system
\begin{equation} \label{eq:dynamicalsystem}
i_{X_{H_1}}\Omega_1=dH_1,
\end{equation}
where the vector field $X_{H_1}:T^*(G\times\mathfrak{g})\to T(T^*(G\times\mathfrak{g}))$ is the push-forward of $X_{\overline{H}_{W_1}}$ by $f^{-1}$, $X_{H_1}=(f^{-1})_*X_{\overline{H}_{W_1}}$. The integral curves of this vector field determine the solutions of the optimal control problem  (see \cite{BarMDieLec:2007,DTelIb:2003}).

\subsection{Left trivialization of the dynamics} \label{subsection:LeftTrivDyn}

Consider now the left trivialization of the cotangent bundle $T^*\left(G\times\mathfrak{g}\right)$, determined by the diffeomorphism $\rho$ defined from $T^*(G\times\mathfrak{g})$ to the space $G\times\mathfrak{g}\times\mathfrak{g}^*\times\mathfrak{g}^*$ as
\[
\rho(\alpha_x,Y,\xi)=\left(x,Y,T^*_{(e,0)}L_{(x,Y)}(\alpha_x,Y,\xi)\right),
\]
which using (\ref{eq:LeftTranslation}) gives $\rho(\alpha_x,Y,\xi)=\left(x,Y,T^*_eL_x\alpha_x+\mathrm{ad}^*_Y\xi,\xi\right)$, for each $(\alpha_x,Y,\xi)$ in $T_{(x,Y)}^*(G\times\mathfrak{g})$, where $(x,Y)\in G\times\mathfrak{g}$.
Observe that, if $(x,Y,\mu,\xi)\in G\times\mathfrak{g}\times\mathfrak{g}^*\times\mathfrak{g}^*$, then
$\rho^{-1}(x,Y,\mu,\xi)=(T^*_xL_{x^{-1}}\left(\mu-\mathrm{ad}^*_Y\xi\right),Y,\xi)\in T^*_{(x,Y)}(G\times\mathfrak{g})$.

The left trivialization of the Hamiltonian (\ref{eq:HamiltonianH1}) is given by $H:=H_1\circ \rho^{-1}$. We easily conclude that
\begin{equation} \label{eq:HamiltonianTriv}
H(x,Y,\mu,\xi)=\mu(Y)+\frac{1}{2}\xi(X_{\xi}).
\end{equation}
Since $\rho$ is a diffeomorphism,  we can endow (see
\cite{1978AbrahamMarsden}, p 177)
$G\times\mathfrak{g}\times\mathfrak{g}^*\times\mathfrak{g}^*$ with a
symplectic structure, as $\Omega=(\rho^{-1})^*\Omega_1$. Furthermore,
$\rho$ is a symplectomorphism  and  (see \cite{1978AbrahamMarsden}, p
194) the Hamiltonian vector field $X_{H_1}$ defined by
(\ref{eq:dynamicalsystem}) may be left trivialized to
$G\times\mathfrak{g}\times\mathfrak{g}^*\times\mathfrak{g}^*$ by
considering the push-forward by $\rho$ of the Hamiltonian vector field
associated to $H_1$, $X_{H}:=\rho_*X_{H_1}$. 

The proposition below leads us to the expression of  $X_H$:
\begin{equation} \label{eq:CampoVectHamilSolucao}
X_H(x,Y,\mu,\xi)=(T_eL_xY,X_{\xi}, 0, -\mu+\mathrm{ad}^*_{Y}\xi),
\end{equation}
for each $(x,Y,\mu,\xi)\in G\times\mathfrak{g}\times\mathfrak{g}^*\times\mathfrak{g}^*$.

\begin{prop} \label{prop:HamiltonianEquations}
The following set of differential equations describe the motions of the Hamiltonian system $\left(G\times\mathfrak{g}\times\mathfrak{g}^*\times\mathfrak{g}^*,\Omega,H\right)$
\begin{equation} \label{eq:SistemaHamiltoniano}
\left\{\begin{array}{l} \dot{x}= T_eL_xY \\ \dot{Y}=X_{\xi}
\\ \dot{\mu}=0 \\ \dot{\xi}=-\mu+\mathrm{ad}^*_{Y}\xi .\end{array} \right.
\end{equation}
\end{prop}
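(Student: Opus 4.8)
The plan is to verify that the vector field $X_H$ given in~(\ref{eq:CampoVectHamilSolucao}) indeed satisfies the defining equation $i_{X_H}\Omega=dH$, and then simply read off the flow equations~(\ref{eq:SistemaHamiltoniano}) as the integral-curve equations $\dot{c}(t)=X_H(c(t))$ for a curve $c(t)=(x(t),Y(t),\mu(t),\xi(t))$ in $G\times\mathfrak{g}\times\mathfrak{g}^*\times\mathfrak{g}^*$. Reading off is immediate once $X_H$ is known: the first component $T_eL_xY$ gives $\dot{x}=T_eL_xY$, the second gives $\dot{Y}=X_\xi$, the third gives $\dot{\mu}=0$, and the fourth gives $\dot{\xi}=-\mu+\mathrm{ad}^*_Y\xi$. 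So the entire content of the proposition is the computation of $X_H$, which is the claim made just before the statement and which I must justify.

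First I would not recompute $X_H$ from scratch on $G\times\mathfrak{g}\times\mathfrak{g}^*\times\mathfrak{g}^*$ using the transported form $\Omega=(\rho^{-1})^*\Omega_1$; that would force me to write down $\Omega$ explicitly, and the appearance of the $\mathrm{ad}^*_Y\xi$ terms coming from the left trivialization makes $\Omega$ noncanonical and unpleasant. Instead, since $\rho$ is a symplectomorphism and $X_H=\rho_*X_{H_1}$, the most economical route is to compute $X_{H_1}$ on the canonical cotangent bundle $\bigl(T^*(G\times\mathfrak{g}),\Omega_1\bigr)$ using the standard left-trivialized formula for Hamilton's equations on $T^*K$ for a Lie group $K$ (here $K=G\times\mathfrak{g}$), and then push forward by $\rho$ using~(\ref{eq:LeftTranslation}) and the adjoint formula~(\ref{eq:adjunt}). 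The Hamiltonian in left-trivialized coordinates is $H(x,Y,\mu,\xi)=\mu(Y)+\tfrac12\xi(X_\xi)$ from~(\ref{eq:HamiltonianTriv}), so I would need the partial derivatives of $H$ with respect to the base variables $(x,Y)$ and the momentum variables $(\mu,\xi)$: the derivative in $\mu$ produces the $Y$ in the $\dot{x}$-direction, the derivative in $\xi$ produces $X_\xi$ (using $\tfrac12\xi(X_\xi)=\tfrac12\langle X_\xi,X_\xi\rangle$ together with the identification $\xi\leftrightarrow X_\xi$ and the relations $\dot X_\xi=X_{\dot\xi}$ from subsection~\ref{subsection:Liegroup}), the derivative in $Y$ produces $\mu$ in the $\dot\xi$-direction with the appropriate sign, and the absence of explicit $x$-dependence (beyond the left-invariant pairing) forces $\dot\mu=0$.

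The key technical input is the general formula for the Hamiltonian vector field of a left-trivialized Hamiltonian on $T^*K\simeq K\times\mathfrak{k}^*$, whose momentum equation carries the characteristic $\mathrm{ad}^*$ term; here $\mathfrak{k}=\mathfrak{g}\times\mathfrak{g}$ with bracket~(\ref{eq:adjunt}), so the coadjoint action on $\mathfrak{k}^*=\mathfrak{g}^*\times\mathfrak{g}^*$ mixes the $\mu$ and $\xi$ slots. Feeding the bracket~(\ref{eq:adjunt}) into the coadjoint term is exactly what generates the $\mathrm{ad}^*_Y\xi$ contribution in the $\dot\xi$-equation and, crucially, leaves $\dot\mu=0$ because $H$ is independent of the group variable $x$ and because the relevant coadjoint contribution to the $\mu$-slot vanishes. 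The main obstacle is precisely this bookkeeping: correctly identifying the Lie-algebra structure of the semidirect product $\mathfrak{g}\times\mathfrak{g}$, transporting the canonical symplectic data through $\rho$ without sign or factor errors, and tracking how the metric identification $\xi\mapsto X_\xi$ interacts with differentiation of the kinetic term $\tfrac12\xi(X_\xi)$. Once these pieces are assembled, the expression~(\ref{eq:CampoVectHamilSolucao}) follows, and the system~(\ref{eq:SistemaHamiltoniano}) is just its componentwise restatement as the equations of the integral curves.
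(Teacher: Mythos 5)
Your proposal follows essentially the same route as the paper's proof: the paper also quotes the standard left-trivialized Hamilton equations on the cotangent bundle of the Lie group $G\times\mathfrak{g}$ (citing Bates--Cushman), makes them explicit using (\ref{eq:LeftTranslation}) and (\ref{eq:adjunt}), substitutes the partial derivatives $\partial H/\partial x=0$, $\partial H/\partial Y=\mu$, $\partial H/\partial \mu=Y$, $\partial H/\partial \xi=X_\xi$, and reads off the integral-curve equations (\ref{eq:SistemaHamiltoniano}).

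However, your stated mechanism for $\dot{\mu}=0$ has a genuine gap. The coadjoint contribution to the $\mu$-slot, namely the first component of $\mathrm{ad}^*_{(Y,X_\xi)}(\mu,\xi)$, equals $\mathrm{ad}^*_Y\mu+\mathrm{ad}^*_{X_\xi}\xi$, and this does \emph{not} vanish on its own; moreover, $H$ is not independent of the group variable of $G\times\mathfrak{g}$ (it depends on $Y$ through $\mu(Y)$), and via the mixing term $\mathrm{ad}_YZ$ in (\ref{eq:LeftTranslation}) the transpose $T^*_{(e,0)}L_{(x,Y)}$ feeds the quantity $-\mathrm{ad}^*_Y\mu$ into the $\dot{\mu}$-equation, not only $-\mu$ into the $\dot{\xi}$-equation. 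What actually happens is a cancellation: the term $-\mathrm{ad}^*_Y(\partial H/\partial Y)=-\mathrm{ad}^*_Y\mu$ cancels the $\mathrm{ad}^*_Y\mu$ coming from the coadjoint term, leaving $\dot{\mu}=\mathrm{ad}^*_{X_\xi}\xi$. This residue vanishes only because of the bi-invariance of the metric: $(\mathrm{ad}^*_{X_\xi}\xi)(Z)=\langle X_\xi,[X_\xi,Z]\rangle=\langle[X_\xi,X_\xi],Z\rangle=0$ by theorem \ref{theorem:basicpropertiesLieGroup}. The paper invokes this identity explicitly (``use the fact that $\mathrm{ad}^*_{X_{\xi}}\xi=0$''); without it, your computation stalls at $\dot{\mu}=\mathrm{ad}^*_{X_\xi}\xi$ and the third equation of (\ref{eq:SistemaHamiltoniano}) does not follow.
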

\begin{proof}
Let $z=(x,Y,\mu,\xi)$ be an integral curve of $X_H$. Following
(\cite{BatesCushman:1997}, section A.3, example 3) the Hamilton
equations on
$G\times\mathfrak{g}\times\mathfrak{g}^*\times\mathfrak{g}^*$ (the
left trivialization of the cotangent bundle of  the Lie group
$G\times\mathfrak{g}$) are called the Euler-Arnold equations and are
given by 
\[
\fl\left\{ \begin{array}{l} \displaystyle \left(\dot{x},\dot{Y}\right)=T_{(e,0)}L_{(x,Y)}\left(\frac{\partial H}{\partial \mu}(z),\frac{\partial H}{\partial \xi}(z)\right) \\[10pt] \displaystyle \left(\dot{\mu},\dot{\xi}\right)=-T^*_{(e,0)}L_{(x,Y)}\left(\frac{\partial H}{\partial x}(z),\frac{\partial H}{\partial Y}(z)\right)+\mathrm{ad}^*_{\left(\frac{\partial H}{\partial \mu}(z),\frac{\partial H}{\partial \xi}(z)\right)}(\mu,\xi).\end{array}\right.
\]
In this notation, $\partial H(z)/\partial x$  is regarded as an element of $T^*_xG$,  $\partial H(z)/\partial Y$ as an element of  $\mathfrak{g}^*$, $\partial H(z)/\partial \mu$ and $\partial H(z)/\partial \xi$ as  elements of  $\mathfrak{g}$.  Use (\ref{eq:LeftTranslation}) and (\ref{eq:adjunt}) to rewrite  the previous system  as
\[
\fl \left\{\begin{array}{l} \dot{x}= T_eL_x\frac{\partial H}{\partial \mu}(z) \\[5pt] \dot{Y}=\frac{\partial H}{\partial \xi}(z)+\mathrm{ad}_Y\frac{\partial H}{\partial \mu}(z)
 \\[5pt]\dot{\mu}= -T^*_eL_x\frac{\partial H}{\partial x}(z)-\mathrm{ad}^*_Y\frac{\partial H}{\partial Y}(z)+\mathrm{ad}^*_{\frac{\partial H}{\partial \mu}(z)}\mu+\mathrm{ad}^*_{\frac{\partial H}{\partial \xi}(z)}\xi\\[5pt]\dot{\xi}=-\frac{\partial H}{\partial Y}(z)+\mathrm{ad}^*_{\frac{\partial H}{\partial \mu}(z)}\xi.\end{array} \right.
\]
From the expression of the Hamiltonian function (\ref{eq:HamiltonianTriv}) it comes $\partial H(z)/\partial x=0$, $\partial H(z)/\partial Y=\mu$, $\partial H(z)/\partial \mu=Y$ and $\partial H(z)/\partial \xi=X_{\xi}$. Now, substitute these expressions in the above system, use the fact that $\mathrm{ad}^*_{X_{\xi}}\xi=0$ and the result follows. \opensquare
\end{proof}

\begin{remark} \label{remark:EquivEulHamil}
It will now be interesting to see how the dynamics described by (\ref{eq:CampoVectHamilSolucao}) is related with the known variational approach of cubic polynomials. To proceed, we begin with the following remarks:
\begin{itemize}
\item First write the last equation  of  (\ref{eq:SistemaHamiltoniano}) as an equation on the Lie algebra, using the identification of covectors and tangent vectors giving by the Riemannian metric of $G$ (see the end of subsection \ref{subsection:Liegroup} for details on notation). We get $\dot{X}_{\xi}=-X_{\mu}-\mathrm{ad}_YX_{\xi}$.
\item Differentiate the above equation and use the third equation of (\ref{eq:SistemaHamiltoniano}), to obtain $\ddot{X}_{\xi}+\mathrm{ad}_{\dot{Y}}X_{\xi}+\mathrm{ad}_Y\dot{X}_{\xi}=0$. Use the second equation of (\ref{eq:SistemaHamiltoniano}), to get $\tdot{Y}+[Y,\ddot{Y}]=0$.
\end{itemize}
We have just shown that  each solution of the equations of Hamilton (\ref{eq:SistemaHamiltoniano}) gives rise to a  solution of the  equations
\begin{equation}\label{eq:EulerLagrange}
Y=T_xL_{x^{-1}}\dot{x}\qquad\tdot{Y}+[Y,\ddot{Y}]=0.
\end{equation}
Conversely,  solutions of (\ref{eq:EulerLagrange})  satisfying $\dot{Y}=X_{\xi}$ and $\dot{X}_{\xi}+X_{\mu}+\mathrm{ad}_YX_{\xi}=0$, correspond to solutions of (\ref{eq:SistemaHamiltoniano}).

The equations (\ref{eq:EulerLagrange}) are the Euler-Lagrange equations (\ref{eq:cubic}) that define the  cubic polynomials on a Lie group, which were  proved in \cite{CrSLei:1995} as an extension of the proof that had already been given in \cite{NoaHeiPad:1989} for $SO(3)$. (The proof use some facts derived from theorem \ref{theorem:basicpropertiesLieGroup}.)
\end{remark}

\section{Reduction of the Hamiltonian system} \label{section:ReductionHamilSystem}

The purpose of this section is to study the  symmetries of the Hamiltonian system $(G\times\mathfrak{g}\times\mathfrak{g}^*\times\mathfrak{g}^*,\Omega,H)$ described in the previous section and use that to reduce the corresponding dynamics, eliminating degrees of freedom in the system. The idea is to apply the symplectic point reduction theorem (see \cite{MarWei:1974} for the original references and \cite{2004OrtegaRatiu} for full details in this subject) and carry out the appropriate interpretation of the reduced Hamiltonian system for the study of important questions as the integrability of the system. Namely, we shall focus our attention on the integrals of motion of the reduced Hamiltonian system.

\subsection{Symplectic point reduced space}

Let $\phi$ be the smooth left action  of the Lie group $G$ on $G\times\mathfrak{g}\times\mathfrak{g}^*\times\mathfrak{g}^*$ defined by
\begin{equation} \label{eq:action}
\phi(g,(x,Y,\mu,\xi))=(gx,Y,\mu,\xi),
\end{equation} for each $g\in G$ and $(x,Y,\mu,\xi)\in G\times\mathfrak{g}\times\mathfrak{g}^*\times\mathfrak{g}^*$.
The moment map of $\phi$ is the map $J:G\times\mathfrak{g}\times\mathfrak{g}^*\times\mathfrak{g}^*\to\mathfrak{g}^*$ defined, for each $(x,Y,\mu,\xi)\in G\times\mathfrak{g}\times\mathfrak{g}^*\times\mathfrak{g}^*$, by
\begin{equation}
J(x,Y,\mu,\xi)=\mathrm{Ad}^*_{x^{-1}}(\mu-\mathrm{ad}^*_Y\xi).
\end{equation}
The action $\phi$ can be seen as the left trivialization to $G\times\mathfrak{g}\times\mathfrak{g}^*\times\mathfrak{g}^*$ of the cotangent lift of the action of $G$ on $G\times\mathfrak{g}$  given, for each $g\in G$ and $(x,Y)\in G\times\mathfrak{g}$, by $(g,(x,Y))\mapsto L_{(g,0)}(x,Y)=(gx,Y)$. Recall that every cotangent lift action is symplectic and has momentum map $\mathrm{Ad}^*$-equivariant (see \cite{1978AbrahamMarsden}, p 283). So, it is easy to verify the following statement:
\begin{description}  \item[(A)] $\phi$ is a symplectic action with momentum map $\mathrm{Ad}^*$-equivariant.
\end{description}
Observe now that the action $\phi$ is proper since it is an action of
a compact Lie group. Moreover, $\phi$ is obviously free and hence the
symmetry algebra of every point in
$G\times\mathfrak{g}\times\mathfrak{g}^*\times\mathfrak{g}^*$ is zero,
which is equivalent to say that every $\eta\in\mathfrak{g}^*$ is a
regular value of the momentum map $J$. 

Let $\eta\in\mathfrak{g}^*$. Consider  the coadjoint isotropy subgroup of  $\eta$, defined by
\begin{equation} \label{eq:Geta}
G_{\eta}:=\{g\in G\,:\,\mathrm{Ad}^*_{g^{-1}}\eta=\eta\}
\end{equation}
and also the level set $J^{-1}(\eta)$ of the momentum map $J$. Note that
\begin{equation} \label{eq:JTrivInversoEta}
J^{-1}(\eta)=\{\left(x,Y,\mu,\xi\right)\in G\times\mathfrak{g}\times\mathfrak{g}^*\times\mathfrak{g}^*\,:\,\mu=\mathrm{Ad}^*_x\eta+\mathrm{ad}_Y^*\xi\}.
\end{equation}
Because $\phi$ is a symplectic $G$-action on the symplectic manifold $G\times\mathfrak{g}\times\mathfrak{g}^*\times\mathfrak{g}^*$ and $\eta\in\mathfrak{g}^*$ is a regular value of $J$, we see that $J^{-1}(\eta)$ is a submanifold of $G\times\mathfrak{g}\times\mathfrak{g}^*\times\mathfrak{g}^*$. Furthermore, as a consequence of $J$ being $\mathrm{Ad}^*$-equivariant, we easily prove that  $J^{-1}(\eta)$ is $G_{\eta}$-invariant. The comments now exposed allow us to conclude that $G_{\eta}$ acts on $J^{-1}(\eta)$ and that the orbit space
\begin{equation} \label{eq:OrbitSpace}
{\left(G\times\mathfrak{g}\times\mathfrak{g}^*\times\mathfrak{g}^*\right)}_{\eta}:=J^{-1}(\eta)/G_{\eta}
\end{equation}
is well defined. The action of $G_{\eta}$ on $J^{-1}(\eta)$ is obtain by restriction of $\phi$ to the subgroup (\ref{eq:Geta}) and to the $G_{\eta}$-invariant submanifold (\ref{eq:JTrivInversoEta}). It turns out that the action $\phi$ is proper and free and that by definition $G_{\eta}$ is a closed  subgroup of $G$,  thus (see \cite{2004OrtegaRatiu}, p 60):
\begin{description}  \item[(B)] The action of $G_{\eta}$ on $J^{-1}(\eta)$ is proper and free.
\end{description}
This result guarantees that the orbit space (\ref{eq:OrbitSpace}) is a smooth manifold and that the corresponding projection map is a surjective submersion.

Since the conditions {\bf (A)} and {\bf (B)} are satisfied, we are able to apply the symplectic point reduction theorem. The theorem states the following:

\begin{quote}
The reduced space ${\left(G\times\mathfrak{g}\times\mathfrak{g}^*\times\mathfrak{g}^*\right)}_{\eta}$ has a unique symplectic structure $\Omega_{\eta}$ characterized by the identity $\pi^*_{\eta}\Omega_{\eta}=i^*_{\eta}\Omega,$ where $i_{\eta}$ is the canonical inclusion from $J^{-1}(\eta)$ to $G\times\mathfrak{g}\times\mathfrak{g}^*\times\mathfrak{g}^*$ and $\pi_{\eta}$ is the projection of $J^{-1}(\eta)$ onto ${\left(G\times\mathfrak{g}\times\mathfrak{g}^*\times\mathfrak{g}^*\right)}_{\eta}$ .
\end{quote}
The symplectic manifold $\left({\left(G\times\mathfrak{g}\times\mathfrak{g}^*\times\mathfrak{g}^*\right)}_{\eta},\Omega_{\eta}\right)$ is called the  {\it symplectic point reduced space}  at $\eta$.

\nosections

Let us now explore in more detail the reduction obtained. More specifically, we will interpret the symplectic point reduced space in a strategic way to conduct further studies. In what follows, we shall adopt the notation $\phi$ for the above $G_{\eta}$-action on $J^{-1}(\eta)$. First, we notice that from (\ref{eq:JTrivInversoEta}), the submanifold $J^{-1}(\eta)$ is diffeomorphic to the semidirect product $G\times\mathfrak{g}\times\mathfrak{g}^*$ (of the Lie group $G\times\mathfrak{g}$ and the vectorial space $\mathfrak{g}^*$) through the diffeomorphism
\begin{equation} \label{eq:diff}
\begin{array}{llll}\Upsilon_{\eta}: & G\times\mathfrak{g}\times\mathfrak{g}^* & \longrightarrow & J^{-1}(\eta)\\[5pt] & (x,Y,\xi) & \longmapsto & (x,Y,\mathrm{Ad}^*_x\eta+\mathrm{ad}^*_Y\xi,\xi).\end{array}
\end{equation}
Consider the $G_{\eta}$-action on $G\times\mathfrak{g}\times\mathfrak{g}^*$ given, for each $g\in G_{\eta}$ and $(x,Y,\xi)\in G\times\mathfrak{g}\times\mathfrak{g}^*$, by $g\cdot (x,Y,\xi)=(gx,Y,\xi)$  and consider also the corresponding orbit space $(G\times\mathfrak{g}\times\mathfrak{g}^*)/G_{\eta}$.
\begin{lemma}
The diffeomorphism (\ref{eq:diff}) induces a new diffeomorphism
\begin{equation}
\bar{\Upsilon}_{\eta}: \left(G\times\mathfrak{g}\times\mathfrak{g}^*\right)/G_{\eta}\longrightarrow (G\times\mathfrak{g}\times\mathfrak{g}^*\times\mathfrak{g}^*)_{\eta},
\end{equation}
which maps the $G_{\eta}$-orbit of the element $(x,Y,\xi)\in G\times\mathfrak{g}\times\mathfrak{g}^*$ into the \mbox{$G_{\eta}$-orbit} of the element $(x,Y,\mathrm{Ad}^*_x\eta+\mathrm{ad}^*_Y\xi,\xi)\in J^{-1}(\eta)$.
\end{lemma}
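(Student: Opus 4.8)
The plan is to reduce the whole statement to a single equivariance check. Since $\Upsilon_{\eta}$ in (\ref{eq:diff}) is already a diffeomorphism onto $J^{-1}(\eta)$, and both the source $G\times\mathfrak{g}\times\mathfrak{g}^*$ and the target $J^{-1}(\eta)$ carry free and proper $G_{\eta}$-actions — the latter being precisely condition \textbf{(B)}, the former being left translation by $G_{\eta}$ on the first factor, which is free and, because $G_{\eta}$ is a closed (hence compact) subgroup of the compact group $G$, proper — it suffices to verify that $\Upsilon_{\eta}$ intertwines these two actions. Once equivariance is in hand, the orbit projections are surjective submersions, so the universal property of the quotients produces the induced smooth map $\bar{\Upsilon}_{\eta}$ on orbit spaces; applying the same argument to $\Upsilon_{\eta}^{-1}$ yields a smooth inverse, and $\bar{\Upsilon}_{\eta}$ is a diffeomorphism mapping orbits exactly as claimed.

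The core computation is therefore the $G_{\eta}$-equivariance of $\Upsilon_{\eta}$. Fix $g\in G_{\eta}$ and $(x,Y,\xi)\in G\times\mathfrak{g}\times\mathfrak{g}^*$. The source action sends $(x,Y,\xi)$ to $(gx,Y,\xi)$, so
\[
\Upsilon_{\eta}(gx,Y,\xi)=(gx,Y,\mathrm{Ad}^*_{gx}\eta+\mathrm{ad}^*_Y\xi,\xi),
\]
whereas applying $\phi$ from (\ref{eq:action}) to $\Upsilon_{\eta}(x,Y,\xi)$ gives $(gx,Y,\mathrm{Ad}^*_x\eta+\mathrm{ad}^*_Y\xi,\xi)$. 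The terms $\mathrm{ad}^*_Y\xi$ and $\xi$ are untouched by both actions, so the two points agree exactly when $\mathrm{Ad}^*_{gx}\eta=\mathrm{Ad}^*_x\eta$. Using the convention $\mathrm{Ad}^*_y\eta=\eta\circ\mathrm{Ad}_y$ and the homomorphism property $\mathrm{Ad}_{gx}=\mathrm{Ad}_g\circ\mathrm{Ad}_x$, one has $\mathrm{Ad}^*_{gx}\eta=\mathrm{Ad}^*_x(\mathrm{Ad}^*_g\eta)$, so it remains to see that $\mathrm{Ad}^*_g\eta=\eta$ for $g\in G_{\eta}$. This is immediate from (\ref{eq:Geta}): the relation $\mathrm{Ad}^*_{g^{-1}}\eta=\eta$ reads $\eta\circ\mathrm{Ad}_{g^{-1}}=\eta$, and composing on the right with $\mathrm{Ad}_g$, together with $\mathrm{Ad}_{g^{-1}}\circ\mathrm{Ad}_g=\mathrm{Ad}_e=\mathrm{id}$, gives $\eta=\eta\circ\mathrm{Ad}_g=\mathrm{Ad}^*_g\eta$. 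Hence $\mathrm{Ad}^*_{gx}\eta=\mathrm{Ad}^*_x\eta$ and $\Upsilon_{\eta}$ is equivariant.

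With equivariance established the descent is routine: $\Upsilon_{\eta}$ carries each source orbit $G_{\eta}\cdot(x,Y,\xi)$ bijectively onto the target orbit through $\Upsilon_{\eta}(x,Y,\xi)$, so the induced set map $\bar{\Upsilon}_{\eta}$ on orbit spaces is well defined and bijective, with smoothness of $\bar{\Upsilon}_{\eta}$ and of its inverse read off from the submersion property of the two projections. I expect the only genuine obstacle to be the bookkeeping in the equivariance step — specifically, confirming that the nonlinear cocycle term $\mathrm{Ad}^*_x\eta$ transforms correctly, which is exactly where the hypothesis $g\in G_{\eta}$ is used. The additive term $\mathrm{ad}^*_Y\xi$ might superficially look as if it too should transform, but since the action (\ref{eq:action}) leaves the $Y$ and $\xi$ slots fixed it contributes nothing; keeping the two group actions distinct and the convention $\mathrm{Ad}^*_y\xi=\xi\circ\mathrm{Ad}_y$ consistent is the only point demanding care.
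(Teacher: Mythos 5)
Your proof is correct and follows essentially the same route as the paper: both reduce the lemma to the $G_{\eta}$-equivariance of $\Upsilon_{\eta}$ and verify it via the computation $\mathrm{Ad}^*_{gx}\eta=\mathrm{Ad}^*_x(\mathrm{Ad}^*_g\eta)=\mathrm{Ad}^*_x\eta$ for $g\in G_{\eta}$. You additionally spell out the descent-to-quotients argument (free and proper actions, surjective submersions, smooth inverse), which the paper leaves implicit but which is standard.
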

\begin{proof}
We have just to prove that $\Upsilon_{\eta}$ is equivariant for the  $G_{\eta}$-action $\phi$ on $J^{-1}(\eta)$ and the $G_{\eta}$-action on $G\times\mathfrak{g}\times\mathfrak{g}^*$ described above. Indeed, if $g\in G_{\eta}$ and $(x,Y,\xi)\in G\times\mathfrak{g}\times\mathfrak{g}^*$, then $\Upsilon_{\eta}(g\cdot (x,Y,\xi))=(gx,Y,\mathrm{Ad}^*_{gx}\eta+\mathrm{ad}^*_Y\xi,\xi)=(gx,Y,\mathrm{Ad}^*_x(\mathrm{Ad}^*_g\eta)+\mathrm{ad}^*_Y\xi,\xi) =(gx,Y,\mathrm{Ad}^*_x\eta+\mathrm{ad}^*_Y\xi,\xi)=\phi_g(\Upsilon_{\eta}(x,Y,\xi))$. \opensquare
\end{proof}

\begin{prop}
The point reduced space ${\left(G\times\mathfrak{g}\times\mathfrak{g}^*\times\mathfrak{g}^*\right)}_{\eta}$ is diffeomorphic to  the space $\mathcal{O}_{\eta}\times\mathfrak{g}\times\mathfrak{g}^*$, where $\mathcal{O}_{\eta}$ denotes the coadjoint orbit of the element $\eta$.
\end{prop}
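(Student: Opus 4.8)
The plan is to combine the previous lemma with an explicit quotient of the free $G_\eta$-action. By the lemma just established, the point reduced space $\left(G\times\mathfrak{g}\times\mathfrak{g}^*\times\mathfrak{g}^*\right)_\eta$ is diffeomorphic to the orbit space $\left(G\times\mathfrak{g}\times\mathfrak{g}^*\right)/G_\eta$, where $G_\eta$ acts by $g\cdot(x,Y,\xi)=(gx,Y,\xi)$. Since this action touches only the $G$-factor, and leaves $\mathfrak{g}\times\mathfrak{g}^*$ entirely alone, I would first observe that the quotient factors as $\left(G/G_\eta\right)\times\mathfrak{g}\times\mathfrak{g}^*$. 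Concretely, the projection $(x,Y,\xi)\mapsto(xG_\eta,Y,\xi)$ is $G_\eta$-invariant and descends to a bijection; because the $G_\eta$-action is proper and free (by condition \textbf{(B)}), the quotient is a smooth manifold and this descended map is a diffeomorphism.

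It then remains to identify $G/G_\eta$ with the coadjoint orbit $\mathcal{O}_\eta$. This is the standard orbit–stabilizer diffeomorphism: the coadjoint orbit $\mathcal{O}_\eta=\{\mathrm{Ad}^*_{g}\eta : g\in G\}$ is the image of the orbit map $\beta:G\to\mathfrak{g}^*$, $\beta(g)=\mathrm{Ad}^*_{g^{-1}}\eta$, whose stabilizer at $\eta$ is precisely the coadjoint isotropy subgroup $G_\eta$ defined in~(\ref{eq:Geta}). I would invoke the general fact that for a proper smooth action the orbit map induces a diffeomorphism $G/G_\eta\cong\mathcal{O}_\eta$; here properness and the compactness of $G$ make $G_\eta$ a closed subgroup and $\mathcal{O}_\eta$ an embedded submanifold of $\mathfrak{g}^*$, so the induced map is a genuine diffeomorphism and not merely a continuous bijection. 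Composing the two identifications yields
\[
\left(G\times\mathfrak{g}\times\mathfrak{g}^*\times\mathfrak{g}^*\right)_\eta
\;\cong\;\left(G/G_\eta\right)\times\mathfrak{g}\times\mathfrak{g}^*
\;\cong\;\mathcal{O}_\eta\times\mathfrak{g}\times\mathfrak{g}^*,
\]
as claimed.

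The routine verifications—that the factored projection is well defined, injective, and a submersion—are immediate because the $G_\eta$-action is a product action trivial on the last two factors, so I would not dwell on them. The only genuinely substantive point, and the one I expect to be the main obstacle, is the smoothness of the orbit–stabilizer identification $G/G_\eta\cong\mathcal{O}_\eta$: one must check that the bijection is a diffeomorphism rather than just a homeomorphism. This is exactly where properness of the action (and hence the closedness of $G_\eta$ and the embedded-submanifold structure on $\mathcal{O}_\eta\subset\mathfrak{g}^*$) is essential, and it can be cited from the standard theory of proper Lie group actions as already used for condition \textbf{(B)}.
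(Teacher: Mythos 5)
Your route is essentially the paper's own. The paper's proof composes the diffeomorphism $\bar{\Upsilon}_{\eta}^{-1}$ from the preceding lemma with a map $\varepsilon_{\eta}:\left(G\times\mathfrak{g}\times\mathfrak{g}^*\right)/G_{\eta}\to\mathcal{O}_{\eta}\times\mathfrak{g}\times\mathfrak{g}^*$ sending the $G_{\eta}$-orbit of $(x,Y,\xi)$ to $(\mathrm{Ad}^*_x\eta,Y,\xi)$, and simply declares $\varepsilon_{\eta}$ to be ``clearly'' a diffeomorphism. Your two-step factorization (split off the $\mathfrak{g}\times\mathfrak{g}^*$ factors untouched by the action, then apply orbit--stabilizer to the $G$-part) is exactly an unpacking of $\varepsilon_{\eta}$, and your appeal to properness and the closedness of $G_{\eta}$ to get smoothness of the orbit--stabilizer bijection supplies the justification the paper leaves implicit.

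There is, however, one step that fails as written: the $G_{\eta}$-action is by left multiplication, $g\cdot(x,Y,\xi)=(gx,Y,\xi)$, so its orbits in the $G$-factor are the \emph{right} cosets $G_{\eta}x$, not the left cosets $xG_{\eta}$. Your projection $(x,Y,\xi)\mapsto(xG_{\eta},Y,\xi)$ is therefore not invariant, since $(gx)G_{\eta}\neq xG_{\eta}$ unless $G_{\eta}$ is normal in $G$; and composing with your orbit map $\beta(g)=\mathrm{Ad}^*_{g^{-1}}\eta$, which descends on left cosets, would give $(x,Y,\xi)\mapsto(\mathrm{Ad}^*_{x^{-1}}\eta,Y,\xi)$, which is likewise not constant on the orbits $G_{\eta}x$. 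The repair is immediate: quotient to $G_{\eta}\backslash G$ and use the map $G_{\eta}x\mapsto\mathrm{Ad}^*_x\eta$, which is well defined because $\mathrm{Ad}^*_{gx}\eta=\mathrm{Ad}^*_x\left(\mathrm{Ad}^*_g\eta\right)=\mathrm{Ad}^*_x\eta$ for $g\in G_{\eta}$, and injective because $\mathrm{Ad}^*_x\eta=\mathrm{Ad}^*_y\eta$ forces $xy^{-1}\in G_{\eta}$, i.e.\ $G_{\eta}x=G_{\eta}y$; alternatively, keep $G/G_{\eta}$ but insert the inversion diffeomorphism $G_{\eta}\backslash G\to G/G_{\eta}$, $G_{\eta}x\mapsto x^{-1}G_{\eta}$, before applying $\beta$. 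Either way the composite becomes $[(x,Y,\xi)]\mapsto(\mathrm{Ad}^*_x\eta,Y,\xi)$, which is precisely the paper's $\varepsilon_{\eta}$ in (\ref{eq:DifeoRedRed}), and the rest of your argument goes through verbatim.
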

\begin{proof}
It is clear that the map $\varepsilon_{\eta}:\left(G\times\mathfrak{g}\times\mathfrak{g}^*\right)/G_{\eta} \to\mathcal{O}_{\eta}\times\mathfrak{g}\times\mathfrak{g}^*$, which takes a \mbox{$G_{\eta}$-orbit} of an element $(x,Y,\xi)\in G\times\mathfrak{g}\times\mathfrak{g}^*$  to a current point $(\mathrm{Ad}_x^*\eta,Y,\xi)\in \mathcal{O}_{\eta}\times\mathfrak{g}\times\mathfrak{g}^*$, is a diffeomorphism. Hence, one constructs the map $\bar{\varphi}_{\eta}:=\varepsilon_{\eta}\circ \bar{\Upsilon}^{-1}_{\eta}$, that is
\begin{equation} \label{eq:DifeoRedRed}
\begin{array}{llll} \bar{\varphi}_{\eta}: & {\left(G\times\mathfrak{g}\times\mathfrak{g}^*\times\mathfrak{g}^*\right)}_{\eta} & \longrightarrow & \mathcal{O}_{\eta}\times\mathfrak{g}\times\mathfrak{g}^* \\[5pt]  & [(x,Y,\mathrm{Ad}^*_x\eta+\mathrm{ad}^*_Y\xi,\xi)]_{\eta} & \longmapsto & (\mathrm{Ad}^*_x\eta,Y,\xi),\end{array}
\end{equation}
which gives us the diffeomorphism. \opensquare
\end{proof}
The result from the previous proposition allows us to conclude now that $\bar{\varphi}_{\eta}$ is a symplectomorphism, being $\bar{\Omega}_{\eta}={(\bar{\varphi}_{\eta}^{-1})}^*\Omega_{\eta}$ the symplectic structure on $\mathcal{O}_{\eta}\times\mathfrak{g}\times\mathfrak{g}^*$.

To finish, it is useful  to notice  that the map $\varphi_{\eta}:=\bar{\varphi}_{\eta}\circ \pi_{\eta}:J^{-1}(\eta)\to\mathcal{O}_{\eta}\times\mathfrak{g}\times\mathfrak{g}^*$ is such that
\begin{equation}\label{eq:varphiEta}
\varphi_{\eta}(x,Y,\mathrm{Ad}^*_x\eta+\mathrm{ad}^*_Y\xi,\xi)=(\mathrm{Ad}^*_x\eta,Y,\xi).
\end{equation}
Furthermore, $\varphi_{\eta}$ is surjective since $\bar{\varphi}_{\eta}$ is bijective and the projection $\pi_{\eta}$ is surjective.

Besides the reduction of the phase space, the symplectic point reduction theorem  has a  dynamic counterpart, which will be addressed in the next subsection.

\subsection{Reduction of the dynamics}

We proceed with the analysis of the reduction of the dynamics of the Hamiltonian system of cubic polynomials $(G\times\mathfrak{g}\times\mathfrak{g}^*\times\mathfrak{g}^*,\Omega,H)$ described in subsection \ref{subsection:LeftTrivDyn}. We first present the natural reduction of dynamics that comes from the symplectic point reduction theorem. Then we shall perform this reduction  as a dynamics on a Hamiltonian system on  $(\mathcal{O}_{\eta}\times\mathfrak{g}\times\mathfrak{g}^*,\bar{\Omega}_{\eta})$ in the context of the previous subsection, that is, using the diffeomorphism (\ref{eq:DifeoRedRed}).

Consider the Hamiltonian $H$ given by (\ref{eq:HamiltonianTriv}) and the associated Hamiltonian vector field $X_H$ defined by (\ref{eq:CampoVectHamilSolucao}). Notice that $H$ is invariant under the $G$-action defined by (\ref{eq:action}). The symplectic point reduction theorem allows us to conclude the following:
\begin{quote}
The flow $f_t$ of the Hamiltonian vector field $X_H$ induces a flow $f_t^{\eta}$ on the reduced space ${\left(G\times\mathfrak{g}\times\mathfrak{g}^*\times\mathfrak{g}^*\right)}_{\eta}$ defined by $\pi_{\eta}\circ f_t\circ i_{\eta}=f_t^{\eta}\circ\pi_{\eta}$. The vector field generated by the flow $f_t^{\eta}$  is Hamiltonian with associated  reduced Hamiltonian function $H_{\eta}$ defined uniquely by $H_{\eta}\circ \pi_{\eta}=H\circ i_{\eta}$. Furthermore, the vector fields $X_H$ and $X_{H_{\eta}}$ are $\pi_{\eta}$-related.
\end{quote}
The triple $({\left(G\times\mathfrak{g}\times\mathfrak{g}^*\times\mathfrak{g}^*\right)}_{\eta},\Omega_{\eta},H_{\eta})$ is called the {\it reduced Hamiltonian system}. We are interested now in characterizing  the corresponding  system on $(\mathcal{O}_{\eta}\times\mathfrak{g}\times\mathfrak{g}^*,\bar{\Omega}_{\eta})$. Namely, we shall determine the expression of the reduced Hamiltonian vector field when interpreted as a vector field on $\mathcal{O}_{\eta}\times\mathfrak{g}\times\mathfrak{g}^*$ regarding the description given at the end of the previous subsection. To effect this, one follows the steps below.

From now on, wherever there is no confusion, we will denote an element $\mathrm{Ad}^*_x\eta\in\mathcal{O}_{\eta}$ by $\theta$, $\theta:=\mathrm{Ad}^*_x\eta$. Introduce the Hamiltonian function  \mbox{$h:=H_{\eta}\circ\bar{\varphi}_{\eta}^{-1}:\mathcal{O}_{\eta}\times\mathfrak{g}\times\mathfrak{g}^*\to\mathbb{R}$}, with $\bar{\varphi}_{\eta}$ defined by (\ref{eq:DifeoRedRed}).
\begin{lemma}
For each $(\theta,Y,\xi)\in\mathcal{O}_{\eta}\times\mathfrak{g}\times\mathfrak{g}^*$, we have
\begin{equation} \label{eq:HamilReduzReduz}
h(\theta,Y,\xi)=\theta(Y)+\frac{1}{2}\xi\left(X_{\xi}\right).
\end{equation}
\begin{proof}
Since the function $\varphi$ defined by (\ref{eq:varphiEta}) is surjective, we know that the element $(x,Y,\theta+\mathrm{ad}^*_Y\xi,\xi)\in J^{-1}(\eta)$ is such that $\varphi_{\eta}(x,Y,\theta+\mathrm{ad}^*_Y\xi,\xi)=(\theta,Y,\xi)$. Thus,  $h(\theta,Y,\xi)=(H_{\eta}\circ\pi_{\eta})(x,Y,\theta+\mathrm{ad}^*_Y\xi,\xi)$. But from the definition of $H_{\eta}$, we have $H_{\eta}\circ\pi_{\eta}=H\circ i_{\eta}$, so use  (\ref{eq:HamiltonianTriv}) and the result follows. \opensquare
\end{proof}
\end{lemma}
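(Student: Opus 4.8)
The plan is to transfer the computation of $h$ back to the unreduced Hamiltonian $H$ by exploiting the surjectivity of $\varphi_{\eta}$ recorded just after (\ref{eq:varphiEta}), and then to read off the value directly from the explicit formula (\ref{eq:HamiltonianTriv}). The whole argument is a controlled unwinding of the chain of definitions $h:=H_{\eta}\circ\bar{\varphi}_{\eta}^{-1}$, $\varphi_{\eta}:=\bar{\varphi}_{\eta}\circ\pi_{\eta}$ and $H_{\eta}\circ\pi_{\eta}=H\circ i_{\eta}$.

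First I would fix $(\theta,Y,\xi)\in\mathcal{O}_{\eta}\times\mathfrak{g}\times\mathfrak{g}^*$ and choose $x\in G$ with $\theta=\mathrm{Ad}^*_x\eta$. By (\ref{eq:JTrivInversoEta}) and (\ref{eq:varphiEta}), the point $(x,Y,\theta+\mathrm{ad}^*_Y\xi,\xi)$ lies in $J^{-1}(\eta)$ and satisfies $\varphi_{\eta}(x,Y,\theta+\mathrm{ad}^*_Y\xi,\xi)=(\theta,Y,\xi)$. Since $\varphi_{\eta}=\bar{\varphi}_{\eta}\circ\pi_{\eta}$, this reads $\bar{\varphi}_{\eta}^{-1}(\theta,Y,\xi)=\pi_{\eta}(x,Y,\theta+\mathrm{ad}^*_Y\xi,\xi)$, whence
\[
h(\theta,Y,\xi)=(H_{\eta}\circ\bar{\varphi}_{\eta}^{-1})(\theta,Y,\xi)=(H_{\eta}\circ\pi_{\eta})(x,Y,\theta+\mathrm{ad}^*_Y\xi,\xi).
\]
Invoking the defining identity $H_{\eta}\circ\pi_{\eta}=H\circ i_{\eta}$ and observing that $i_{\eta}$ is merely the inclusion of $J^{-1}(\eta)$, the right-hand side equals $H(x,Y,\theta+\mathrm{ad}^*_Y\xi,\xi)$. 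Substituting the explicit expression (\ref{eq:HamiltonianTriv}) then gives
\[
h(\theta,Y,\xi)=(\theta+\mathrm{ad}^*_Y\xi)(Y)+\frac{1}{2}\xi(X_{\xi})=\theta(Y)+(\mathrm{ad}^*_Y\xi)(Y)+\frac{1}{2}\xi(X_{\xi}).
\]

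The only step that is not purely formal is the vanishing of the cross term. Using the definition of the coadjoint representation of $\mathfrak{g}$, namely $\mathrm{ad}^*_Y\xi=\xi\circ\mathrm{ad}_Y$, one has $(\mathrm{ad}^*_Y\xi)(Y)=\xi(\mathrm{ad}_Y Y)=\xi([Y,Y])=0$, which collapses the expression to the claimed (\ref{eq:HamilReduzReduz}). I do not expect a genuine obstacle here: the difficulty is entirely bookkeeping, and the mild subtlety is that $h$ is defined through $\bar{\varphi}_{\eta}^{-1}$ on the abstract quotient, so surjectivity of $\varphi_{\eta}$ is what licenses the passage to a concrete representative in $J^{-1}(\eta)$ on which $H$ can be evaluated. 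For rigor one may note that the value is independent of the choice of $x$ with $\theta=\mathrm{Ad}^*_x\eta$, since $H_{\eta}$ is well defined on the quotient and $\bar{\varphi}_{\eta}$ is a bijection.
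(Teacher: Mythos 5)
Your proposal is correct and follows essentially the same route as the paper: pick a representative $(x,Y,\theta+\mathrm{ad}^*_Y\xi,\xi)\in J^{-1}(\eta)$ via the surjectivity of $\varphi_{\eta}$, unwind $h=H_{\eta}\circ\bar{\varphi}_{\eta}^{-1}$ together with $H_{\eta}\circ\pi_{\eta}=H\circ i_{\eta}$, and evaluate $H$ using (\ref{eq:HamiltonianTriv}). The only difference is that you make explicit the cancellation $(\mathrm{ad}^*_Y\xi)(Y)=\xi([Y,Y])=0$ and the independence of the choice of $x$, both of which the paper leaves implicit in ``the result follows.''
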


Since $\bar{\varphi}_{\eta}$ is a symplectomorphism, we see that the Hamiltonian vector field $X_h$ associated to $h$ is such that $X_h\circ\bar{\varphi}_{\eta}=T\bar{\varphi}_{\eta}\circ X_{H_{\eta}}$. Then, $X_h\circ \bar{\varphi}_{\eta}\circ\pi_{\eta}=T\bar{\varphi}_{\eta}\circ X_{H_{\eta}}\circ \pi_{\eta}$, that is, $X_h\circ\varphi_{\eta}=T\bar{\varphi}_{\eta}\circ X_{H_{\eta}}\circ\pi_{\eta}$, where $\varphi_{\eta}$ is the function defined by (\ref{eq:varphiEta}). Now, use the fact that $X_H$ and $X_{H_{\eta}}$ are $\pi_{\eta}$-related, that is, $T\pi_{\eta}\circ X_H\circ i_{\eta}=X_{H_{\eta}}\circ\pi_{\eta}$, to obtain
\begin{equation} \label{eq:RelCVreduzreduzCVinicial}
X_h\circ \varphi_{\eta}=T\varphi_{\eta}\circ X_H\circ i_{\eta}.
\end{equation}
We shall develop the expression of $X_h$ for each point in $\mathcal{O}_{\eta}\times\mathfrak{g}\times\mathfrak{g}^*$, using the relation now obtained and after we present a useful remark.

\begin{lemma} \label{lemma:Tzvarphi}
If $(a,b,c,d)\in T_{(x,Y,\theta+\mathrm{ad}^*_Y\xi,\xi)}J^{-1}(\eta)$, then
\begin{equation} \label{eq:lemTzvarphi}
T_{(x,Y,\theta+\mathrm{ad}^*_Y\xi,\xi)}\varphi_{\eta}(a,b,c,d)=\left(\mathrm{ad}^*_{T_xL_{x^{-1}}a}\theta,b,d\right).
\end{equation}
\end{lemma}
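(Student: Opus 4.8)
The plan is to exploit the explicit parametrization of $J^{-1}(\eta)$ supplied by the diffeomorphism $\Upsilon_{\eta}$ of (\ref{eq:diff}) and thereby reduce the computation of $T\varphi_{\eta}$ to a single derivative on the group. Writing $\psi_{\eta}:G\times\mathfrak{g}\times\mathfrak{g}^*\to\mathcal{O}_{\eta}\times\mathfrak{g}\times\mathfrak{g}^*$, $(x,Y,\xi)\mapsto(\mathrm{Ad}^*_x\eta,Y,\xi)$, the defining relation (\ref{eq:varphiEta}) says precisely that $\varphi_{\eta}\circ\Upsilon_{\eta}=\psi_{\eta}$. Since $\Upsilon_{\eta}$ is a diffeomorphism, the chain rule gives $T\varphi_{\eta}\circ T\Upsilon_{\eta}=T\psi_{\eta}$, so it suffices to identify how a tangent vector $(a,b,c,d)\in T_{(x,Y,\theta+\mathrm{ad}^*_Y\xi,\xi)}J^{-1}(\eta)$ is produced by $T\Upsilon_{\eta}$ and then apply $T\psi_{\eta}$.

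First I would read off $T\Upsilon_{\eta}$ from the first, second and fourth components of (\ref{eq:diff}), which are the identity in $x$, $Y$ and $\xi$. Thus the vector $T_{(x,Y,\xi)}\Upsilon_{\eta}(\delta x,\delta Y,\delta\xi)=(a,b,c,d)$ has $a=\delta x$, $b=\delta Y$, $d=\delta\xi$, while $c$ is the derivative of the third component $\mathrm{Ad}^*_x\eta+\mathrm{ad}^*_Y\xi$ forced by the constraint defining $J^{-1}(\eta)$. The essential observation is that the output of $T\varphi_{\eta}$ never sees $c$: applying $T\psi_{\eta}$ to $(\delta x,\delta Y,\delta\xi)=(a,b,d)$ returns the second and third slots $b$ and $d$ unchanged, so the only content of the lemma lies in the first slot, namely the derivative of $x\mapsto\mathrm{Ad}^*_x\eta$ in the direction $a$.

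The one real computation is therefore $\left.\frac{\mathrm{d}}{\mathrm{d}t}\right|_{t=0}\mathrm{Ad}^*_{x(t)}\eta$ for a curve $x(t)$ with $x(0)=x$ and $\dot{x}(0)=a$. I would set $Z:=T_xL_{x^{-1}}a\in\mathfrak{g}$, so that $x^{-1}x(t)$ is a curve through $e$ with velocity $Z$, and use $\left.\frac{\mathrm{d}}{\mathrm{d}t}\right|_{t=0}\mathrm{Ad}_{x(t)}=\mathrm{Ad}_x\circ\mathrm{ad}_Z$ together with the conventions of subsection \ref{subsection:Liegroup}, under which $\mathrm{Ad}^*_x\eta=\eta\circ\mathrm{Ad}_x$ and $\mathrm{ad}^*_Z\theta=\theta\circ\mathrm{ad}_Z$. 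Differentiating $\mathrm{Ad}^*_{x(t)}\eta=\eta\circ\mathrm{Ad}_{x(t)}$ then yields $\eta\circ\mathrm{Ad}_x\circ\mathrm{ad}_Z=\theta\circ\mathrm{ad}_Z=\mathrm{ad}^*_{T_xL_{x^{-1}}a}\theta$, which is exactly the first coordinate in (\ref{eq:lemTzvarphi}). Assembling the three slots gives $T\varphi_{\eta}(a,b,c,d)=(\mathrm{ad}^*_{T_xL_{x^{-1}}a}\theta,b,d)$, as claimed.

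The step I expect to require the most care is keeping the co-adjoint conventions straight in that last derivative. Because the paper writes $\mathrm{Ad}^*_x\eta=\eta\circ\mathrm{Ad}_x$, with the representation of $G$ living at $x^{-1}$, one must check that left-trivializing $a$ to $Z=T_xL_{x^{-1}}a$, rather than right-trivializing, is what reproduces $\mathrm{ad}^*$ acting on $\theta=\mathrm{Ad}^*_x\eta$ exactly, with no stray sign or inversion. Once that identification is pinned down, the remainder is the routine chain-rule bookkeeping described above.
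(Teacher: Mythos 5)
Your proof is correct and takes essentially the same route as the paper: the paper's own argument differentiates $\varphi_{\eta}$ along a curve $\beta=(\beta_1,\beta_2,\mathrm{Ad}^*_{\beta_1}\eta+\mathrm{ad}^*_{\beta_2}\beta_3,\beta_3)$ in $J^{-1}(\eta)$, i.e.\ precisely the image under $\Upsilon_{\eta}$ of a curve $(\beta_1,\beta_2,\beta_3)$, which is your chain-rule factorization $\varphi_{\eta}\circ\Upsilon_{\eta}=\psi_{\eta}$ phrased with curves instead of maps. The single substantive computation, $\frac{\mathrm{d}}{\mathrm{d}t}\big|_{t=0}\mathrm{Ad}^*_{x(t)}\eta=\mathrm{ad}^*_{T_xL_{x^{-1}}a}\theta$ with the paper's conventions $\mathrm{Ad}^*_x\eta=\eta\circ\mathrm{Ad}_x$ and $\mathrm{ad}^*_Z\theta=\theta\circ\mathrm{ad}_Z$, is identical in both (and you in fact spell it out more carefully than the paper does).
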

\begin{proof}
First we would like to clarify that the choice of the element $(a,b,c,d)$  is related to the fact that $T_{(x,Y,\theta+\mathrm{ad}^*_Y\xi,\xi)}J^{-1}(\eta)$ be a subset of $T_{(x,Y,\theta+\mathrm{ad}^*_Y\xi,\xi)}(G\times\mathfrak{g}\times\mathfrak{g}^*\times\mathfrak{g}^*)=
T_xG\times\mathfrak{g}\times\mathfrak{g}^*\times\mathfrak{g}^*$. Now, let $\beta=(\beta_1,\beta_2,\mathrm{Ad}^*_{\beta_1}\eta+\mathrm{ad}^*_{\beta_2}\beta_3,\beta_3)$  be a curve   in $J^{-1}(\eta)$ satisfying the initial conditions $\beta(0)=(x,Y,\theta+\mathrm{ad}^*_Y\xi,\xi)$ and $\dot{\beta}(0)=(a,b,c,d)$. Then, we know that $T_{(x,Y,\theta+\mathrm{ad}^*_Y\xi,\xi)}\varphi_{\eta}(a,b,c,d)={\mathrm{d}(\varphi_{\eta}\circ \beta)(0)/\mathrm{d}t}$, which is equal to $(\mathrm{ad}^*_{T_{\beta_1(0)}L_{\beta_1(0)^{-1}}\dot{\beta}_1(0)}\mathrm{Ad}^*_{\beta_1(0)}\eta,\dot{\beta_2}(0),\dot{\beta_3}(0))$, that is, $(\mathrm{ad}^*_{T_xL_{x^{-1}}a}\theta,b,d)$ as we wanted to prove. \opensquare
\end{proof}

We are now able to prove the following result.
\begin{prop} \label{prop:CVHamilReduzReduz}
For each point $(\theta,Y,\xi)\in\mathcal{O}_{\eta}\times\mathfrak{g}\times\mathfrak{g}^*$, the dynamical vector field $X_h$ of the Hamiltonian system $(\mathcal{O}_{\eta}\times\mathfrak{g}\times\mathfrak{g}^*,\bar{\Omega}_{\eta} ,h)$ is given by
\begin{equation} \label{eq:CVHamilReduzReduz}
X_h(\theta,Y,\xi)=(\mathrm{ad}^*_Y\theta,X_{\xi},-\theta).
\end{equation}
\end{prop}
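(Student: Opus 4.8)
The plan is to read off $X_h$ directly from the intertwining relation (\ref{eq:RelCVreduzreduzCVinicial}), feeding into it the explicit expression (\ref{eq:CampoVectHamilSolucao}) for $X_H$ together with the tangent map computed in lemma \ref{lemma:Tzvarphi}. First I would fix a point $(\theta,Y,\xi)\in\mathcal{O}_{\eta}\times\mathfrak{g}\times\mathfrak{g}^*$ and, since $\varphi_{\eta}$ is surjective, choose the preimage $z=(x,Y,\theta+\mathrm{ad}^*_Y\xi,\xi)\in J^{-1}(\eta)$ with $\theta=\mathrm{Ad}^*_x\eta$, so that $\varphi_{\eta}(z)=(\theta,Y,\xi)$ by (\ref{eq:varphiEta}). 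Evaluating (\ref{eq:RelCVreduzreduzCVinicial}) at $z$ then gives $X_h(\theta,Y,\xi)=T_z\varphi_{\eta}\big(X_H(z)\big)$, reducing everything to a computation at the single point $z$.

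Next I would compute $X_H(z)$ from (\ref{eq:CampoVectHamilSolucao}) with $\mu=\theta+\mathrm{ad}^*_Y\xi$. The only simplification needed is in the last slot: $-\mu+\mathrm{ad}^*_Y\xi=-(\theta+\mathrm{ad}^*_Y\xi)+\mathrm{ad}^*_Y\xi=-\theta$, so that $X_H(z)=(T_eL_xY,\,X_{\xi},\,0,\,-\theta)$; that is, the quadruple is $(a,b,c,d)=(T_eL_xY,X_{\xi},0,-\theta)$.

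Finally I would apply lemma \ref{lemma:Tzvarphi} to this quadruple. Using $T_xL_{x^{-1}}a=T_xL_{x^{-1}}\big(T_eL_xY\big)=Y$, which holds because $T_xL_{x^{-1}}\circ T_eL_x=T_e(L_{x^{-1}}\circ L_x)=\mathrm{id}$, the lemma yields $T_z\varphi_{\eta}(a,b,c,d)=(\mathrm{ad}^*_Y\theta,\,X_{\xi},\,-\theta)$, which is exactly (\ref{eq:CVHamilReduzReduz}).

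The one point demanding care is that lemma \ref{lemma:Tzvarphi} may only be applied to vectors tangent to $J^{-1}(\eta)$, so I must first be sure that $X_H(z)\in T_zJ^{-1}(\eta)$, i.e. that $(a,b,c,d)=(T_eL_xY,X_{\xi},0,-\theta)$ is compatible with the defining constraint $\mu=\mathrm{Ad}^*_x\eta+\mathrm{ad}^*_Y\xi$ of (\ref{eq:JTrivInversoEta}). This is guaranteed by the reduction theorem itself: since $H$ is $G$-invariant, $J$ is conserved along the flow $f_t$, hence $f_t$ preserves $J^{-1}(\eta)$ and $X_H$ is tangent to it — which is precisely what makes $X_H\circ i_{\eta}$ meaningful in (\ref{eq:RelCVreduzreduzCVinicial}). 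Granting this, the argument is a short substitution and the remaining work consists only of the two elementary identities above.
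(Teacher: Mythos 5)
Your proposal is correct and follows essentially the same route as the paper's own proof: evaluate the intertwining relation (\ref{eq:RelCVreduzreduzCVinicial}) at the preimage $z=(x,Y,\theta+\mathrm{ad}^*_Y\xi,\xi)$, compute $X_H(z)=(T_eL_xY,X_{\xi},0,-\theta)$ from (\ref{eq:CampoVectHamilSolucao}), and apply lemma \ref{lemma:Tzvarphi} with $(a,b,c,d)=(T_eL_xY,X_{\xi},0,-\theta)$. Your additional remark verifying that $X_H(z)$ is tangent to $J^{-1}(\eta)$ (via $G$-invariance of $H$ and conservation of $J$) is a careful point the paper leaves implicit, but it does not change the argument.
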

\begin{proof}
We begin  by noticing that due to the relations  (\ref{eq:varphiEta}) and (\ref{eq:RelCVreduzreduzCVinicial}), we get $X_h(\theta,Y,\xi)=(T\varphi_{\eta}\circ X_H)(x,Y,\theta+\mathrm{ad}^*_Y\xi,\xi)$. Now  according (\ref{eq:CampoVectHamilSolucao}) we have $X_H(x,Y,\theta+\mathrm{ad}^*_Y\xi,\xi)=(T_eL_xY,X_{\xi},0,-\theta)$. It remains only to show that $T_{(x,Y,\theta+\mathrm{ad}^*_Y\xi,\xi)}\varphi_{\eta}(T_eL_xY,X_{\xi},0,-\theta)=(\mathrm{ad}^*_Y\theta,X_{\xi},-\theta)$, but this follows from (\ref{eq:lemTzvarphi}) taking $(a,b,c,d)=(T_eL_xY,X_{\xi},0,-\theta)$. \opensquare
\end{proof}

Thus, the equations of Hamilton on the reduced manifold  $\mathcal{O}_{\eta}\times\mathfrak{g}\times\mathfrak{g}^*$ are given by
\begin{equation}  \label{eq:EqHamilReduzReduz}
\left\{\begin{array}{l} \dot{\theta}=\mathrm{ad}^*_Y\theta\\ \dot{Y}=X_{\xi}
 \\ \dot{\xi}=-\theta .\end{array} \right.
 \end{equation}

\begin{remark}
In remark \ref{remark:EquivEulHamil}, we have proved the equivalence between the solutions of the equations of Hamilton (\ref{eq:SistemaHamiltoniano}) and the Euler-Lagrange equations (\ref{eq:EulerLagrange}).  It is obvious that the reduced dynamics described by (\ref{eq:CVHamilReduzReduz}) is also related with the variational approach of cubic polynomials. In fact, an integral curve of the reduced Hamiltonian vector field (\ref{eq:CVHamilReduzReduz}) give rise to a curve that satisfies the second equation of the Euler-Lagrange system (\ref{eq:EulerLagrange}). Indeed, writing  the first equation  of  (\ref{eq:EqHamilReduzReduz}) as an equation on the Lie algebra (see the end of subsection \ref{subsection:Liegroup} for details on notation),  we get $\dot{X}_{\theta}+[Y,X_{\theta}]=0$. But, by the other two equations  of (\ref{eq:EqHamilReduzReduz}), we know that $X_{\theta}=-\ddot{Y}$. We conclude that a solution of  the reduced system gives a solution of
\begin{equation}
\tdot{Y}+[Y,\ddot{Y}]=0.
\end{equation}
\end{remark}

\subsection{Invariants along the extremal trajectories}

Integrals of motion of a dynamical system are quantities that are conserved along the flow of that system and can be sometimes associated to symmetries of the system. A classical result due to Liouville, exposed in \cite{Arnold:1989} by Arnold (which has also contributed to a more complete version of this result), says that a dynamical system on a phase space of dimension $2N$ is completely integrable if it admits $N$ (almost everywhere) functionally independent first integrals in involution (i.e., their Poisson brackets all vanish). However, these situations are rather rare. In practice, one often deals with Hamiltonian systems which admits a non-abelian group of symmetries or an abelian group of symmetries in number less than the required  to have complete integrability. If some special conditions are satisfied,  the non-abelian set of independent integrals can lead us to the integrability of the system, as explained by Fomenko and Mishchenko in \cite{MishFomenk:1978}, authors of the theorem of the non-commutative integrability. But, in most cases one naturally expects to find only  a number of independent Poisson commuting invariants less than $N$, which can allow us to partially reduce the original system (Poincar\'e-Lyapunov-Liouville-Arnol'd theorem, \cite{Nekhoroshev:1994}).

The problem we are concerned with in this subsection is the preliminar
analysis of the symmetries of the Hamiltonian system
$(\mathcal{O}_{\eta}\times\mathfrak{g}\times\mathfrak{g}^*,\bar{\Omega}_{\eta}
,h)$, so that we can study the integrability of the system in a
forthcoming work. The problem of reduction of the order of a
Hamiltonian system is an old subject of study, with emphasis on
several works of Poincar\'e and Cartan, namely the Lie-Cartan theorem
(for more details on this subject, see \cite{1988ArnKozNein}). We
shall find, by using this classical theorem of Lie-Cartan, a maximum
number of functionally independent first integrals in involution. 

In the rest of this paper, for the sake of simplicity, we shall use the following notation:
\begin{equation} \label{eq:Notation}
\mathrm{dim}\mathfrak{g}=n \qquad \mbox{and} \qquad\mathrm{dim}\mathcal{O}_{\eta}=2m
\end{equation}
(recall that the dimension of the coadjoint orbit is always even),
where obviously $2m\leq n$. So, the dimension of the phase space of
our Hamiltonian system shall be 
\begin{equation} \label{eq:NotationDimSympleManif}
\mathrm{dim}\left(\mathcal{O}_{\eta}\times\mathfrak{g}\times\mathfrak{g}^*\right)=2(n+m).
\end{equation} 
 
Besides, we shall restrict ourselves to the case of semisimple Lie
groups, for technical reason which will become clear below.

A function
$f:\mathcal{O}_{\eta}\times\mathfrak{g}\times\mathfrak{g}^*\to\mathbb{R}$
is an integral of motion of our dynamical system (with associated
vector field $X_h$) if $[X_h(w)](f)=0$, that is,
$[(\mathrm{d}f)(w)](X_h(w))=0$, for all $w\in
\mathcal{O}_{\eta}\times\mathfrak{g}\times\mathfrak{g}^*$. It is
important to notice that
$\mathrm{d}f:\mathcal{O}_{\eta}\times\mathfrak{g}\times\mathfrak{g}^*\to
T^*(\mathcal{O}_{\eta}\times\mathfrak{g}\times\mathfrak{g}^*)$ is such
that $df(w)\in T^*_w\mathcal{O}_{\eta}\times
\mathfrak{g}^*\times\mathfrak{g}\subseteq\mathfrak{g}\times\mathfrak{g}^*\times\mathfrak{g}$. In
that sense, we shall assume the notation
$\mathrm{d}f(w)=\left( \partial f(w)/\partial\theta,\partial
  f(w)/\partial Y,\partial f(w)/\partial\xi\right)$. 

The Hamiltonian function  is naturally an integral of motion of the Hamiltonian system. So, the function (\ref{eq:HamilReduzReduz}), that is,
\[
l_1\equiv h=\theta(Y)+\frac{1}{2}\xi\left(X_{\xi}\right)
\]
is an integral of motion. But besides that, we are able to prove the following result:
\begin{prop}
The functions $l_{i+1}\,:\mathcal{O}_{\eta}\times\mathfrak{g}\times\mathfrak{g}^*\to\mathbb{R}$ defined by
\begin{equation} \label{eq:InvN}
l_{i+1}=(\theta+\mathrm{ad}^*_Y\xi)(A_i),\;\mbox{with}\;A_i\;\mbox{a fixed basis element of}\;\mathfrak{g},
\end{equation}
are integrals of motion of the Hamiltonian system  $(\mathcal{O}_{\eta}\times\mathfrak{g}\times\mathfrak{g}^*,\bar{\Omega}_{\eta} ,h)$.
\end{prop}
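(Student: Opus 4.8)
The plan is to verify directly that each $l_{i+1}$ is constant along the flow of $X_h$, that is, that $[X_h(w)](l_{i+1})=0$ for every $w=(\theta,Y,\xi)\in\mathcal{O}_{\eta}\times\mathfrak{g}\times\mathfrak{g}^*$. Since the basis element $A_i$ is fixed and the pairing $\beta\mapsto\beta(A_i)$ with $\beta\in\mathfrak{g}^*$ is linear, this reduces to showing that the $\mathfrak{g}^*$-valued function $w\mapsto\theta+\mathrm{ad}^*_Y\xi$ has vanishing derivative along $X_h$; equivalently, that $\theta+\mathrm{ad}^*_Y\xi$ is constant along every integral curve of the reduced equations of Hamilton (\ref{eq:EqHamilReduzReduz}).

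First I would differentiate $\theta+\mathrm{ad}^*_Y\xi$ along such an integral curve. Exploiting the bilinearity of the map $(Y,\xi)\mapsto\mathrm{ad}^*_Y\xi$ in its two arguments (both $Y$ and $\xi$ now lie in the fixed vector spaces $\mathfrak{g}$ and $\mathfrak{g}^*$, so ordinary differentiation applies), the time derivative splits as $\dot{\theta}+\mathrm{ad}^*_{\dot{Y}}\xi+\mathrm{ad}^*_Y\dot{\xi}$. Substituting $\dot{\theta}=\mathrm{ad}^*_Y\theta$, $\dot{Y}=X_{\xi}$ and $\dot{\xi}=-\theta$ from (\ref{eq:EqHamilReduzReduz}), the two contributions $\mathrm{ad}^*_Y\theta$ and $\mathrm{ad}^*_Y(-\theta)$ cancel, leaving only the single term $\mathrm{ad}^*_{X_{\xi}}\xi$.

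The key step is then to invoke the identity $\mathrm{ad}^*_{X_{\xi}}\xi=0$, the very fact already used in the proof of Proposition \ref{prop:HamiltonianEquations}. This follows at once from the bi-invariance of the metric in Theorem \ref{theorem:basicpropertiesLieGroup}: for any $Z\in\mathfrak{g}$ one has $(\mathrm{ad}^*_{X_{\xi}}\xi)(Z)=\xi([X_{\xi},Z])=\langle X_{\xi},[X_{\xi},Z]\rangle=\langle[X_{\xi},X_{\xi}],Z\rangle=0$. Hence $\theta+\mathrm{ad}^*_Y\xi$ is constant along every integral curve, and pairing with the fixed $A_i$ gives $[X_h(w)](l_{i+1})=0$, which is exactly the claim.

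I expect the computation to be essentially mechanical; the only point demanding care is the correct differentiation of the bilinear term $\mathrm{ad}^*_Y\xi$ and the cancellation it produces, together with the recognition that the surviving term is precisely the quantity that vanishes by bi-invariance. It is worth recording the conceptual reason behind the statement: on the level set $J^{-1}(\eta)$ described by (\ref{eq:JTrivInversoEta}) one has $\theta+\mathrm{ad}^*_Y\xi=\mu$, and $\mu$ is conserved by the original dynamics, since $\dot{\mu}=0$ in (\ref{eq:SistemaHamiltoniano}). Thus the invariants $l_{i+1}$ are nothing but the components of the conserved momentum $\mu$ in the fixed basis, transported to the reduced space through the $\pi_{\eta}$-relatedness of $X_H$ and $X_h$; one could alternatively base the proof on this observation instead of the direct computation.
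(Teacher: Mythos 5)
Your proposal is correct and takes essentially the same route as the paper: the paper computes $\mathrm{d}l_{i+1}(w)=\left(A_i,-\mathrm{ad}^*_{A_i}\xi,\mathrm{ad}_YA_i\right)$ and pairs it with $X_h(w)=(\mathrm{ad}^*_Y\theta,X_{\xi},-\theta)$, which produces exactly your cancellation of the two $\mathrm{ad}^*_Y\theta$ contributions and leaves the single term that vanishes because $\mathrm{ad}^*_{X_{\xi}}\xi=0$ by bi-invariance of the metric. Your closing observation, that the $l_{i+1}$ are just the components of the conserved momentum $\mu$ carried to the reduced space via the $\pi_{\eta}$-relatedness of $X_H$ and $X_h$, is a nice conceptual gloss the paper does not spell out, but the computation you actually perform coincides with the paper's.
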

\begin{proof}
Consider $w=(\theta,Y,\xi)\in \mathcal{O}_{\eta}\times\mathfrak{g}\times\mathfrak{g}^*\subseteq\mathfrak{g}^*\times\mathfrak{g}\times\mathfrak{g}^*$, with $\theta=\mathrm{Ad}^*_x\eta$, for some $x\in G$.  An elementary computation gives
\begin{equation} \label{eq:dl1maisi}
\mathrm{d}l_{i+1}(w)=\left(A_i,-\mathrm{ad}^*_{A_i}\xi,\mathrm{ad}_YA_i\right).
\end{equation}
Then, by (\ref{eq:CVHamilReduzReduz}), we get
$[X_h(l_{i+1})](w)=\theta(\mathrm{ad}_YA_i)-\xi(\mathrm{ad}_{A_i}X_{\xi})-\theta(\mathrm{ad}_YA_i)=
\langle[X_{\xi},X_{\xi}],A_i\rangle=0$, which shows that
$[(\mathrm{d}l_{i+1})(w)](X_h(w))=0$, for each $i=1,...,n$, proving
that the given functions are  invariant.  \opensquare
\end{proof}

\begin{remark}
Recall that in the context of the variational approach two invariants are known, (\ref{eq:I1}) and (\ref{eq:I2}). These invariants are related with the invariants now obtained. Indeed, it is simple to prove (using theorem \ref{theorem:basicpropertiesLieGroup}) that  $l_1\equiv I_1$ and $2I_2-\sum_{i=1}^{n}l^2_{i+1}\equiv\theta(X_{\theta})$.
\end{remark}

It is immediate to see that any linear combination of the $n+1$
integrals of motion described  above  is also an integral of motion of
the system. A natural question arises: to extract, from the set of
invariant functions, a maximal set of independent commuting invariant
functions. 

We have the following two results:

\begin{lemma} \label{lemma:Independence}
 If the Lie group $G$ is semisimple, then $\{l_j\}_{ j=1,\ldots, n+1 }$
 is a set of functionally independent functions on  an open
  dense subset of $\mathcal{O}_\eta\times \mathfrak{g}\times
  \mathfrak{g}^*$.
\end{lemma}

\begin{proof}
In the proof  and for the sake of simplicity, we identify $\eta \in
\mathfrak{g}^*$ with an element of $\mathfrak{g}$ via the Riemannian
metric. We shall also consider $\mathcal{O}_{\eta}$ to  be the adjoint orbit defined by a
regular  element $\eta$ in a Cartan subalgebra $\mathfrak{t}$ of $\mathfrak{g}$
and $r$ be the rank of $\mathfrak{g}$.

Consider the orthonormal basis  $\{A_1,...,A_{n}\}$ of the Lie algebra
$\mathfrak{g}$ and represent by $C_{ji}^k$ the structure constants of
this Lie algebra for this basis.  Consider then the coordinate
expression for the invariants:

\[
\begin{array}{l} \displaystyle
 l_1=\sum_{j=1}^ny^j\theta_j(\nu_1,\ldots,\nu_{2m})+\frac{1}{2}\sum_{j=1}^n(\xi_j)^2 \\[5pt]
 \displaystyle     l_{i+1}=\theta_i(\nu_1,\ldots,\nu_{2m})+\sum_{j,k=1}^{n}C_{ji}^ky^j\xi_k, \quad i=1,\ldots,n,
\end{array}\]
where $\nu_1,\ldots,\nu_{2m}$ are the variables in the orbit
$\mathcal{O}_\eta$. The differentials of the invariants can be written
as 
\[
\mathrm{d}l_1=\sum_{\alpha=1}^{2m}\sum_{j=1}^ny^j\frac{\partial
  \theta_j}{\partial\nu_\alpha}d\nu_\alpha+\sum_{j=1}^n\theta_jdy^j+\sum_{j=1}^n
\xi_jd\xi_j 
\]
\[
\mathrm{d}l_{i+1}=\sum_{\alpha=1}^{2m}\frac{\partial \theta_i}{\partial\nu_\alpha}d\nu_\alpha+\sum_{j,k=1}^nC_{ji}^k\xi_kdy^j+\sum_{j,k=1}^nC_{ji}^ky^jd\xi_k, \quad i=1,\ldots,n.
\]
We shall prove that $\mathrm{d}l_1\wedge \mathrm{d}l_2 \wedge \ldots
\wedge \mathrm{d}l_{n+1}\neq 0$ on an open dense subset of
$\mathcal{O}_\eta\times\mathfrak{g}\times\mathfrak{g}^*$. \linebreak
The coefficients of the above exterior product corresponding to the
elements \linebreak $\mathrm{d}\nu_1\wedge \mathrm{d}\nu_2 \wedge
\ldots \wedge \mathrm{d}\nu_{2m}\wedge \mathrm{d}\xi_{i_1} \wedge
\ldots \wedge \mathrm{d}\xi_{i_{r+1}}$  are sums containing $2m$
terms, not depending on the variables $\xi_{i}$, and $r+1$ terms,
each one depending linearly on a different variable $\xi_{i}$. The
$r+1$ terms are given by minors of order $n$ of the matrix
representing the linear map $F$ from $T_{\theta}
\mathcal{O}_{\eta}\times \mathfrak{g}$
 into $\mathfrak{g}$ that applies (Z, W) to $i_{*|\theta}(Z)-ad_YW$,
 where  $i$ is the inclusion of
 $\mathcal{O}_{\eta}$ into $\mathfrak{g}$. If we prove
 that the map $F$ has full rank in an open dense subset of
$\mathcal{O}_\eta\times\mathfrak{g}$, then  the corresponding minor
of order $n$ of the matrix representation gives the non-vanishing term we
are looking for.

In order to do so, let us recall the standard root space decomposition
(see for instance \cite{helgason}) for the complexified algebra
$\mathfrak{g}^{\mathbb{C}}$: 
$$
\mathfrak{g}^{\mathbb{C}}=\mathfrak{g}^{\mathbb{C}}_0\oplus(\bigoplus_{\alpha
  \in \Delta}\mathfrak{g}^{\mathbb{C}}_{\alpha})
$$
with  respect to a Cartan subalgebra  $\mathfrak{t}^{\mathbb{C}}$  
(i.e., $\mathfrak{g}^{\mathbb{C}}_0$
corresponds to the centralizer of  $\mathfrak{t}^{\mathbb{C}}$ in
$\mathfrak{g}^{\mathbb{C}}$ which is equal to
$\mathfrak{t}^{\mathbb{C}}$ if the algebra is semisimple).    The
related vectors  $X_{\alpha}$, $Y_{\alpha} \in \mathfrak{g}$ such 
that $[T,X_{\alpha}]=\alpha(T) Y_{\alpha}$ and 
$[T,Y_{\alpha}]=-\alpha(T) X_{\alpha}$, for all $T\in \mathfrak{t}$ and for each root $\alpha \in \Delta$,
induce the decomposition 
$$
  \ds \mathfrak{g}=\mathfrak{t}\oplus(\sum_{\alpha \in \Delta_+}\mathbb{R}X_{\alpha}\oplus \mathbb{R}Y_{\alpha})
$$

\noindent and give a basis  $B^1_{\mathfrak{g}}$  of $
\mathfrak{g}$.
Let us consider the tangent space $ T_\theta\mathcal{O}_\eta=\{
[\theta, A], A\in \mathfrak{g} \}$, for each $\theta \in
\mathcal{O}_{\eta}$. Using the basis $B_\mathfrak{g}^1$, it is
possible  to check that there exists an open dense subset of  $\mathcal{O}_\eta$ defined by elements $\theta$  such that
$T_\theta\mathcal{O}_\eta\cap \mathfrak{t}=\{0\}$. 
Under this condition,
it is possible to extend a basis $B_{T_{\theta} \mathcal{O}_{\eta}}$
of $T_{\theta} \mathcal{O}_{\eta}$, using a basis of $\mathfrak{t}$,
in order to obtain a basis $B^2_{\mathfrak{g}}$ of $\mathfrak{g}$.
Now, we consider the basis $B_{T_{\theta} \mathcal{O}_{\eta}}\times
B^1_{\mathfrak{g}}$
 of $T_{\theta} \mathcal{O}_{\eta}\times \mathfrak{g}$ and the basis
$B^2_{\mathfrak{g}}$ of $\mathfrak{g}$. It is clear that the matrix
of the map $F$ relatively to these basis has full rank for all
${\theta}\in \mathcal{O}_{\eta}$ such that $T_{\theta}
\mathcal{O}_{\eta}\cap \mathfrak{t}=\{0\}$ and for all
$Y=T+\sum_{\alpha \in \Delta_+}\left(b_{\alpha}X_{\alpha}+
c_{\alpha}Y_{\alpha}\right)$ with no null coefficients $b_{\alpha}$
and $c_{\alpha}$, for each ${\alpha} \in \Delta_+$.  Therefore, we
proved that the map $F$ has full rank in an open dense subset of
${\mathcal O}_{\eta}\times \mathfrak{g}$. This implies that there is
an open dense subset of ${\mathcal O}_{\eta}\times \mathfrak{g} \times
\mathfrak{g}^{*}$ where the functions $\{ l_{1}, \cdots, l_{n+1}\}$
are functionally independent. 
\end{proof}

\begin{lemma} \label{lemma:LieAlgStru}
Considering the Poisson structure on
$\mathcal{O}_\eta\times\mathfrak{g}\times\mathfrak{g}^*$, the set
$\{l_{i+1}\}_{i=1,\ldots,n}$ is endowed with a Lie algebra structure
that makes it isomorphic to the Lie algebra $\mathfrak{g}$. 
\end{lemma}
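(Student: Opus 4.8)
The plan is to compute the Poisson brackets $\{l_{i+1},l_{j+1}\}$ directly and show that they reproduce the structure constants of $\mathfrak{g}$. First I would recall that the invariants (\ref{eq:InvN}) can be written as $l_{i+1}=(\theta+\mathrm{ad}^*_Y\xi)(A_i)$, and note that the combination $\theta+\mathrm{ad}^*_Y\xi$ is precisely the momentum-type quantity $\mu$ appearing in the unreduced picture (see (\ref{eq:JTrivInversoEta})); under the moment map $J$ for the action $\phi$, the functions $l_{i+1}$ essentially pull back the linear coordinates on $\mathfrak{g}^*$. Since the components of an equivariant momentum map satisfy $\{J_{A_i},J_{A_j}\}=J_{[A_i,A_j]}$ (the standard fact that the momentum map is a Poisson morphism, cf.\ the $\mathrm{Ad}^*$-equivariance recorded in statement \textbf{(A)}), I expect the brackets to close with exactly the Lie-bracket structure of $\mathfrak{g}$.

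More concretely, I would carry out the calculation at the reduced level using the vector-field expression already available. For each $A_i$ I have from (\ref{eq:dl1maisi}) that $\mathrm{d}l_{i+1}(w)=\left(A_i,-\mathrm{ad}^*_{A_i}\xi,\mathrm{ad}_YA_i\right)$. The Poisson bracket $\{l_{i+1},l_{j+1}\}$ equals $\bar{\Omega}_\eta(X_{l_{i+1}},X_{l_{j+1}})=\mathrm{d}l_{j+1}(X_{l_{i+1}})$, so I need the Hamiltonian vector field $X_{l_{i+1}}$ on $(\mathcal{O}_\eta\times\mathfrak{g}\times\mathfrak{g}^*,\bar{\Omega}_\eta)$. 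I would obtain $X_{l_{i+1}}$ either from the symplectic form $\bar{\Omega}_\eta$ on the product of a coadjoint orbit with $\mathfrak{g}\times\mathfrak{g}^*$ (whose orbit factor carries the Kostant--Kirillov--Souriau form and whose $\mathfrak{g}\times\mathfrak{g}^*$ factor is a canonical cotangent pairing), or, more cleanly, by transporting the computation back through $\varphi_\eta$ to the unreduced system where the Poisson structure is the canonical one on $T^*(G\times\mathfrak{g})$ left-trivialized. Evaluating $\mathrm{d}l_{j+1}$ on $X_{l_{i+1}}$ and collecting the three components, the orbit term contributes $\theta([A_i,A_j])$, while the cross terms from the $Y$ and $\xi$ slots combine, via the invariance identity $\langle[Y,Z],W\rangle=\langle Y,[Z,W]\rangle$ of theorem \ref{theorem:basicpropertiesLieGroup} and the relation $X_{\mathrm{ad}^*_Y\xi}=-\mathrm{ad}_YX_\xi$, to assemble the remaining structure-constant contributions, giving $\{l_{i+1},l_{j+1}\}=(\theta+\mathrm{ad}^*_Y\xi)([A_i,A_j])=l_{[A_i,A_j]}$ in the notation of (\ref{eq:InvN}).

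The conclusion is then immediate: if $[A_i,A_j]=\sum_k C_{ij}^k A_k$ with the structure constants $C_{ij}^k$ of $\mathfrak{g}$ in the chosen orthonormal basis, linearity of $l_{(\cdot)+1}$ in its defining algebra element gives $\{l_{i+1},l_{j+1}\}=\sum_k C_{ij}^k\, l_{k+1}$. Hence the assignment $A_i\mapsto l_{i+1}$ is a linear isomorphism from $\mathfrak{g}$ onto $\mathrm{span}\{l_{i+1}\}$ carrying the Lie bracket to the Poisson bracket, which is the asserted Lie algebra isomorphism.

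I expect the main obstacle to be the bookkeeping in the cross terms: the $Y$-component $-\mathrm{ad}^*_{A_i}\xi$ and the $\xi$-component $\mathrm{ad}_YA_i$ of $\mathrm{d}l_{i+1}$ interact through both the orbit part and the cotangent part of $\bar{\Omega}_\eta$, and one must verify that the spurious $\mathrm{ad}^*_Y$ pieces cancel or recombine correctly so that the final answer depends on $\theta+\mathrm{ad}^*_Y\xi$ as a single block rather than on $\theta$ and $Y,\xi$ separately. The cleanest way to sidestep this is to observe that the $l_{i+1}$ are, up to the identification $\varphi_\eta$, the components of the (equivariant) moment map $J$ composed with evaluation on basis elements, whence the bracket relation $\{J_{A_i},J_{A_j}\}=J_{[A_i,A_j]}$ is automatic and no explicit structure-constant manipulation is needed; I would present the direct computation as a check and invoke equivariance to guarantee the closure.
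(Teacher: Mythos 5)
Your direct computation is, in substance, the paper's own proof: the paper lifts $l_{j+1}$ through $\varphi_{\eta}$ to the function $L_{j+1}(z)=\mu(A_j)$ on $J^{-1}(\eta)$, computes its Hamiltonian vector field on the unreduced space exactly as in proposition \ref{prop:HamiltonianEquations}, pushes it down with lemma \ref{lemma:Tzvarphi} to obtain $X_{l_{j+1}}(w)=\left(\mathrm{ad}^*_{A_j}\theta,\mathrm{ad}_YA_j,\mathrm{ad}^*_{A_j}\xi\right)$, and then pairs with $\mathrm{d}l_{i+1}(w)=\left(A_i,-\mathrm{ad}^*_{A_i}\xi,\mathrm{ad}_YA_i\right)$, the Jacobi identity collapsing the two $\xi$-terms so that $\{l_{i+1},l_{j+1}\}=(\theta+\mathrm{ad}^*_Y\xi)([A_j,A_i])=\sum_k C_{ji}^k l_{k+1}$. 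The cross-term bookkeeping you worry about resolves exactly as you anticipate, with everything reassembling into the single block $\theta+\mathrm{ad}^*_Y\xi$. (You obtain $[A_i,A_j]$ where the paper gets $[A_j,A_i]$; this is only the sign convention in $\{f,g\}=\Omega(X_f,X_g)$ versus $[\mathrm{d}f](X_g)$, and at worst it replaces an isomorphism by an anti-isomorphism, which composed with $A\mapsto -A$ is again an isomorphism, so the conclusion is unaffected.)

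The genuine problem is the shortcut you say you would actually rely on, presenting the computation only ``as a check.'' The functions $l_{i+1}$ are \emph{not} components of the momentum map $J$ of the action $\phi$, so the relation $\{J_{A_i},J_{A_j}\}=J_{[A_i,A_j]}$ for equivariant momentum maps cannot be invoked for them. Indeed $J(x,Y,\mu,\xi)=\mathrm{Ad}^*_{x^{-1}}(\mu-\mathrm{ad}^*_Y\xi)$, so on the level set $J^{-1}(\eta)$ every component $J(A_i)$ is the constant $\eta(A_i)$ and descends to a constant on the reduced space; these cannot be the non-constant invariants $l_{i+1}$. What the $l_{i+1}$ pull back to under $\varphi_{\eta}$ is $\mu(A_i)$, the body momentum, which (up to sign) is the momentum map of the cotangent lift of \emph{right} translations on $G\times\mathfrak{g}$ --- a different action, about which statement \textbf{(A)} and the equivariance of $J$ say nothing. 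A corrected version of your shortcut does exist: the functions $\mu(A_i)$ close under the canonical bracket upstairs (being momentum components of that right action), they are $\phi$-invariant, and brackets of invariant functions descend through Marsden--Weinstein reduction; but that last descent step is a Poisson-reduction fact not contained in the point reduction theorem as quoted in the paper, so it would itself require proof. As written, the closure is not ``automatic,'' and it is your fallback computation --- which coincides with the paper's argument --- that actually establishes the lemma.
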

\begin{proof}
Consider the orthonormal basis  $\{A_1,...,A_{n}\}$ of the Lie algebra
$\mathfrak{g}$ and represent by $C_{ji}^k$ the structure constants of
this Lie algebra for this basis. If
$w=(\theta,Y,\xi)\in\mathcal{O}_{\eta}\times\mathfrak{g}\times\mathfrak{g}^*$,
we know that
$\{l_{i+1},l_{j+1}\}(w)=\left[\mathrm{d}l_{i+1}(w)\right]\left(X_{l_{j+1}}(w)\right)$,
with $X_{l_{j+1}}$ denoting the Hamiltonian vector field associated to
$l_{j+1}$. 

In order to proceed with the proof, let us find  now the expression of
the Hamiltonian vector field $X_{l_{j+1}}$ of $l_{j+1}$  in a similar
way to the one used in proposition  \ref{prop:CVHamilReduzReduz} for
$X_h$. To do this, first consider the function
$L_{j+1}:G\times\mathfrak{g}\times\mathfrak{g}^*\times\mathfrak{g}^*\to\mathbb{R}$
uniquely characterized by the identity $L_{j+1}\circ
i_{\eta}=l_{j+1}\circ\varphi_{\eta}$, where $\varphi_{\eta}$ is the
surjective function defined by (\ref{eq:varphiEta}). More precisely,
we have
$L_{j+1}(z)=l_{j+1}(\theta,Y,\xi)_{|_{\theta=\mu-\mathrm{ad}^*_Y\xi}}=\mu(A_j)$
where the element $z$ is  such that $w=\varphi_{\eta}(z)$, that is,
$z=(x,Y,\theta+\mathrm{ad}^*_Y\xi,\xi)\in J^{-1}(\eta)$. Then, the
Hamiltonian vector field $X_{L_{j+1}}$ associated to the function
$L_{i+1}$  is 
related with $X_{l_{j+1}}$ as follows
\[
\fl X_{l_{j+1}}(w)=T_z\varphi_{\eta}(X_{L_{j+1}}(z))=\left(\mathrm{ad}^*_{T_xL_{x^{-1}}X^1_{L_{j+1}}}\theta,X^2_{L_{j+1}},X^4_{L_{j+1}}\right),
\]
where we use  lemma \ref{lemma:Tzvarphi} choosing
$(a,b,c,d)=X_{L_{j+1}}(z)=(X^1_{L_{j+1}},X^2_{L_{j+1}},X^3_{L_{j+1}},X^4_{L_{j+1}})$. To
completely determine $X_{l_{j+1}}(w)$, we must to find the expression
of the components $X^1_{L_{j+1}}$, $X^2_{L_{j+1}}$ and
$X^4_{L_{j+1}}$. A computation analogous to the one done for the
components of the vector field $X_H$ in proposition
\ref{prop:HamiltonianEquations} shows that $X_{L_{j+1}}^1=T_eL_xA_j$,
$X^2_{L_{j+1}}=\mathrm{ad}_YA_j$ and
$X^4_{L_{j+1}}=\mathrm{ad}^*_{A_j}\xi,$ and hence 
\begin{equation}
X_{l_{j+1}}(w)=\left(\mathrm{ad}^*_{A_j}\theta,\mathrm{ad}_YA_j,\mathrm{ad}^*_{A_j}\xi\right).
\end{equation}
Now, recalling the expression (\ref{eq:dl1maisi}) of $\mathrm{d}l_{i+1}(w)$, we get
\[
\begin{array}{l}
    \fl \{l_{i+1},l_{j+1}\}(w)=\theta\left([A_j,A_i]\right)+\xi\left([A_i,[A_j,Y]]+[A_j,[Y,A_i]]\right)=\\[5pt]
    \fl =\theta([A_j,A_i])-\xi([Y,[A_i,A_j]]) = (\theta+ad^*_Y\xi)([A_j,A_i])= \\[5pt]
    \fl =\displaystyle \sum_{i,j,k=1}^{n}C_{ji}^kl_{k+1}.
\end{array}
\]
The structure constants  of the Lie algebra generated by the functions $l_{i+1},\;i=1,...,n$ and the structure constants of $\mathfrak{g}$ coincide, so the algebras are isomorphic. \opensquare
\end{proof}

Let us summarize the situation for the case of semisimple Lie groups:
\begin{itemize}
\item[--] We have $n+1$ smooth functions, the integrals of motion
  $l_1,l_2,...,l_{n+1}$, whose differentials are linearly independent
  on $\mathcal{O}_{\eta}\times\mathfrak{g}\times\mathfrak{g}^*$ (see
  lemma \ref{lemma:Independence}). 
\item[--] The linear span of these functions is closed with respect to the Poisson bracket (see lemma \ref{lemma:LieAlgStru} and recall that $l_1$ is in involution with all the other functions).
\end{itemize}
Thus, the linear span $\mathfrak{L}$ of these $n+1$ functions has a structure of a finite-dimensional real Lie algebra, with $\mathrm{dim}\mathfrak{L}=n+1$. This algebra is called the algebra of integrals.

We now present the Lie-Cartan theorem formulated according
\cite{1988ArnKozNein}:
\begin{theorem}[S. Lie - E. Cartan]
Consider a Hamiltonian system $(M,\omega,H)$ with first integrals $F_1,...,F_k$ such that $\{F_i,F_j\}=a_{ij}(F_1,...,F_k)$. Let $F:M\to \mathbb{R}^k$ be the natural mapping generated by these set of integrals.

Suppose that the point $c\in\mathbb{R}^k$ is not a critical value of the mapping $F$ and that in its neighborhood the rank of matrix $(a_{ij})$ is constant. Then in a small neighborhood $U\subset\mathbb{R}^k$ of $c$ one can find $k$ independent functions $\varphi_j:U\to\mathbb{R}$ such that the functions $\phi_j=\varphi_j\circ F:N\to\mathbb{R}$, where $N=F^{-1}(U)$, satisfy the relations
\[
\{\phi_1,\phi_2\}=...=\{\phi_{2q-1},\phi_{2q}\}=1,
\]
whereas the remaining brackets $\{\phi_i,\phi_j\}$ vanish. The number $2q$ equals the rank of the matrix $(a_{ij})$.
\end{theorem}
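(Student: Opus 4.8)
The plan is to read the hypothesis $\{F_i,F_j\}=a_{ij}(F_1,\ldots,F_k)$ as the statement that the coefficients $a_{ij}$ define a Poisson structure on $\mathbb{R}^k$ for which $F=(F_1,\ldots,F_k):M\to\mathbb{R}^k$ is a Poisson map, then to bring that structure into canonical (Darboux) form near the regular value $c$, and finally to pull the resulting coordinate functions back through $F$. In this way the canonical relations on the target descend verbatim to $M$.

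First I would transport the bracket to the target. For smooth functions $f,g$ defined near $c$, with standard coordinates $y_1,\ldots,y_k$ on $\mathbb{R}^k$, set $\{f,g\}_{\mathbb{R}^k}:=\sum_{i,j}a_{ij}\,\partial f/\partial y_i\,\partial g/\partial y_j$. A one-line computation using the hypothesis gives the intertwining relation $\{f\circ F,g\circ F\}=\{f,g\}_{\mathbb{R}^k}\circ F$. Since the $F_i$ have skew brackets, $a_{ij}=-a_{ji}$ on the relevant neighbourhood, so $\{\cdot,\cdot\}_{\mathbb{R}^k}$ is a skew biderivation; the only substantive point is the Jacobi identity. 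Here I would use that $c$ is a regular value, so $F$ is a submersion at every point of $F^{-1}(c)$, hence on an open set mapping onto a neighbourhood $U$ of $c$, which makes $F^*$ injective on functions defined near $c$. Writing $J(f,g,h)$ for the Jacobiator of $\{\cdot,\cdot\}_{\mathbb{R}^k}$, applying the intertwining relation twice yields $F^*J(f,g,h)=\{F^*f,\{F^*g,F^*h\}\}+\mbox{(cyclic)}=0$, because $\{\cdot,\cdot\}$ on $M$ is a genuine Poisson bracket; injectivity of $F^*$ then forces $J\equiv0$ near $c$. Thus $(U,\{\cdot,\cdot\}_{\mathbb{R}^k})$ is a Poisson manifold whose bivector has rank equal to $\mathrm{rank}(a_{ij})=2q$, constant near $c$ by hypothesis.

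Next I would invoke the normal form for a constant-rank Poisson structure (the Darboux--Weinstein splitting theorem): around a point where the rank is locally constant and equal to $2q$, there exist coordinates $\varphi_1,\ldots,\varphi_k$ on a neighbourhood $U$ of $c$ in which $\{\varphi_{2i-1},\varphi_{2i}\}_{\mathbb{R}^k}=1$ for $i=1,\ldots,q$ while every other bracket of two coordinates vanishes; in particular $\varphi_{2q+1},\ldots,\varphi_k$ are Casimir functions. Finally I would pull these back by setting $\phi_j:=\varphi_j\circ F$ on $N=F^{-1}(U)$. Because $F$ is Poisson, $\{\phi_i,\phi_j\}=\{\varphi_i,\varphi_j\}_{\mathbb{R}^k}\circ F$, which reproduces exactly the asserted relations $\{\phi_1,\phi_2\}=\cdots=\{\phi_{2q-1},\phi_{2q}\}=1$ with all other brackets zero; functional independence of the $\phi_j$ follows from that of the $\varphi_j$ together with $F$ being a submersion.

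The main obstacle is concentrated in the two rank arguments of the middle step. Establishing that $a_{ij}$ genuinely satisfies the Jacobi identity really requires the regular-value hypothesis, since it is injectivity of $F^*$ that lets the identity descend from $M$ to $\mathbb{R}^k$; and producing the Casimir coordinates $\varphi_{2q+1},\ldots,\varphi_k$ requires the rank to be \emph{locally constant}, because a dropping rank would leave a nonvanishing transverse Poisson structure and obstruct the canonical splitting. Once constant rank is in force the splitting theorem is a Darboux-type induction on $q$, and everything else is bookkeeping.
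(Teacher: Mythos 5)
The paper offers no proof of this statement at all: it is the classical Lie--Cartan theorem, quoted from \cite{1988ArnKozNein} and used as a black box to count independent integrals in involution. So there is nothing in the paper to compare against, and your argument has to be judged on its own; judged so, it is correct and is in fact the classical route. Reading the hypothesis $\{F_i,F_j\}=a_{ij}(F_1,\ldots,F_k)$ as defining a bracket $\{f,g\}_{\mathbb{R}^k}=\sum_{i,j}a_{ij}\,\partial_i f\,\partial_j g$ on the target, verifying the intertwining identity $\{f\circ F,g\circ F\}=\{f,g\}_{\mathbb{R}^k}\circ F$, using injectivity of $F^*$ (available because the regular-value hypothesis makes $F$ an open submersion near $F^{-1}(c)$, so its image covers a neighbourhood of $c$) to push skew-symmetry and the Jacobi identity down to $\mathbb{R}^k$, and then applying the constant-rank Darboux/Lie normal form on $U$ before pulling the canonical coordinates back through $F$ --- this is precisely Lie's theory of function groups, which is how the theorem is established in the literature the paper cites.

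Two small points to tidy. First, $c$ being a regular value controls only the level set $F^{-1}(c)$: points of $N=F^{-1}(U)$ over nearby values may well be critical, so $F$ need not be a submersion on all of $N$, and your closing claim that the $\phi_j$ are independent on $N$ is not guaranteed without shrinking to the open set where $dF$ has full rank. Note, however, that the theorem as stated only asserts independence of the $\varphi_j$ on $U$, while the bracket relations for the $\phi_j$ hold on all of $N$ because they follow from the intertwining identity alone, which needs no submersivity; so this costs you nothing. Second, the constant-rank normal form you invoke carries most of the real content of the theorem; since it is itself a classical result (a Darboux-type induction on $q$, as you say), invoking it is legitimate, but a self-contained write-up would include that induction.
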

We are interested in the following consequence of the above theorem (see \cite{1988ArnKozNein}):
\begin{remark} \label{remark:LieCartan}
Under the hypotheses of Lie-Cartan theorem and using the
  notation above, there are $k-q$ independent integrals in involution:
  $\phi_2, \phi_4,..., \phi_{2q-2}, \phi_{2q},
  \phi_{2q+1},...,\phi_{k}$. As a consequence, the  original
  Hamiltonian system can be reduced, by the method of Poincar\'e, to a
  system with minus $k-q$ degrees of freedom than the original one. 
\end{remark} 

Thus, if we consider the case of a semisimple group $G$, and an open dense subset $D$ in
$\mathcal{O}_{\eta}\times\mathfrak{g}\times\mathfrak{g}^*$, the
functional independence of the $n+1$ integrals is satisfied and
the skew-symmetric Poisson bracket matrix $\left(\{l_i,l_j\}\right)$,
$1\leq i,j\leq n+1$, has maximal rank. Notice that, by lemma
\ref{lemma:LieAlgStru}, the maximum rank of this Poisson bracket
matrix  coincides with the maximum rank of the matrix
$M_{\mathfrak{g}}(a)=\left(M_{ij}(a)\right)$, with
$M_{ij}(a)=\sum_{k=1}^{n}C^k_{ij}a_{k}$, for $a=(a_1,...,a_n)\in
\mathbb{R}^n$ and where $C^k_{ij}$ are structure constants of the Lie
algebra $\mathfrak{g}$. We fix the notation (note that this rank is
always even):
\begin{equation} \label{eq:rankMg}
\mathrm{r}_{\mathfrak{g}}:=\frac{1}{2}\mathrm{max}_{a\in \mathbb{R}^n}\mathrm{rank}M_{\mathfrak{g}}(a).
\end{equation}

In consideration of  remark \ref{remark:LieCartan}, the
$2(m+n)$-dimensional Hamiltonian system
$(\mathcal{O}_{\eta}\times\mathfrak{g}\times\mathfrak{g}^*,\bar{\Omega}_{\eta},h)$
admits $ n+1-\mathrm{r}_{\mathfrak{g}}$ functions  defined on an open
subset of $D$, which form a set of independent integrals of the motion
in involution. Thus, we can expect the system to be reduced to a
system of dimension equal to $2(m+\mathrm{r}_{\mathfrak{g}}-1)$.

\begin{example}
Consider the problem of cubic polynomials on the Lie group  $SO(3)$, which can be
illustrated by the dynamic optimal control problem of the spherical
free rigid body study by the authors of this paper in
\cite{AbrCamCGal:2010,AbrCamCGal:2011}. In this case, it is well known that  the
coadjoint orbit $\mathcal{O}_{\eta}$ corresponds to a $2$-dimensional
sphere with radius $\|\eta\|$. (For the singular case $\eta=0$  the
orbit reduces to one point.) So, considering the non-singular case,
the symplectic reduced manifold
$\mathcal{O}_{\eta}\times\mathfrak{so}(3)\times\mathfrak{so}^*(3)$ has
dimension equal to $8$ and $\mathrm{r}_{\mathfrak{g}}=1$. Applying the
above theorem, the corresponding reduced Hamiltonian system has  three
independent invariants in involution and it can be reduced to a
$2$-dimensional system.
\end{example}

As mentioned at the beginning of this section, these results on
integrals of the motion may have important implications
from the point of view of the integrability of the corresponding
dynamical systems.  Thus, they are relevant  at the level of
determining the cubic polynomials.  In conclusion, by using the
Lie-Cartan theorem, we are able to reduce the Hamiltonian system to a
system with at least, $n+1-\mathrm{r}_{\mathfrak{g}}$ degrees of
freedom less. 

In a future paper we hope to address this integrability problem in detail.

\section*{References}

\end{document}